\documentclass[11pt]{article}
\usepackage{amsmath,amsfonts,amsthm,amssymb, mathtools}
\usepackage{mathrsfs, graphicx,color,latexsym, tikz, calc,
}
\usepackage{tikz-cd}
\usepackage{graphicx}
\usepackage{epstopdf}
\usepackage{enumerate}
\usepackage{caption}
\usepackage{stmaryrd}
\usepackage{hyperref}
\usepackage{xcolor}

\hypersetup{
    colorlinks=true,
    citecolor=blue,
    linkcolor=blue,
    urlcolor=blue
}

\usetikzlibrary{shadows}
\usetikzlibrary{patterns,arrows,decorations.pathreplacing}
\numberwithin{equation}{section}
\newtheorem{theorem}{Theorem}[section]
\newtheorem{lemma}[theorem]{Lemma}

\newtheorem{conjecture}[theorem]{Conjecture}

\newtheorem{remark}[theorem]{Remark}

\allowdisplaybreaks[4]

\date{}
\title{\bf \Large On a conjecture regarding the product version of the Hilton-Milner theorem
\author{{\small Xucheng Bu$^a$\, \ Lihua Feng$^a$\, \ Zejun Huang$^{b,}$\footnote{Corresponding author.
\newline{\hspace*{5mm}Email addresses:} buxcmath@163.com(X. Bu), fenglh@163.com(L. Feng), zejunhuang@szu.edu.cn(Z. Huang), lulugdmath@163.com(L. Lu), jv01065499zai@163.com(Q. Wang).
}\, \ Lu Lu$^a$\, \ Qifan Wang$^a$}\\[2mm]
\footnotesize $^a$School of Mathematics and Statistics, HNP-LAMA, Central South University\\
\footnotesize Changsha, Hunan, 410083, China\\
\footnotesize $^b$School of Mathematical Sciences, Shenzhen University\\
\footnotesize Shenzhen, 518060, China}}

\begin{document}
	\maketitle
	\begin{abstract}
		Recently, Frankl and Wang considered a product version of the classical Hilton--Milner theorem. They conjectured that, if $\mathcal{F} \subset \binom{[n]}{k}$ and $\mathcal{G} \subset \binom{[n]}{\ell}$ are non-trivial cross-intersecting families with $n \geq 2k > 2\ell \geq 4$, the maximum of $|\mathcal{F}||\mathcal{G}|$ is attained by the natural Hilton--Milner-type configurations.
In this paper, we present two main results concerning this conjecture. Firstly, we show that the conjecture does not hold in general. By introducing a  two-center construction,
we prove that for every fixed integer $\ell \geq 3$ and all sufficiently large $k$, the conjecture is false in a linear range $2k+1 \leq n \leq (c_\ell - \epsilon)k$ for any $0 < \epsilon < c_\ell - 2$, where $c_\ell > 2$ is an explicit constant.
 Secondly, we prove that   the conjecture holds when $n > 100\ell k^2$ and $3 \leq \ell < k$, and we completely characterize the extremal families. Our proofs rely on   the size of minimal covers and analyzing the structural properties of $2$-cover graphs.\\[1mm]

\noindent{\it AMS Classification}: 05D05, 05C65\\[1mm]
\noindent{\it Keywords}: Finite Sets; Non-trivial Cross-intersecting;   Hilton--Milner Theorem
\end{abstract}
	
\section{Introduction}\label{sec:introduction}

Let $[n]=\{1,2,\ldots,n\}$ be the standard ground set. For an integer $r \ge 0$, we denote by $\binom{[n]}{r}$ the family of all $r$-element subsets of $[n]$. A family $\mathcal{F}\subset 2^{[n]}$ is called \emph{intersecting} if $F\cap F'\ne \emptyset$ for any $F,F'\in \mathcal{F}$.
The study of intersecting families forms the cornerstone of extremal set theory, pioneered by the classical Erd\H{o}s--Ko--Rado (EKR) theorem \cite{E61}.
\begin{theorem} [Erd\H{o}s--Ko--Rado \cite{E61}]
For $n \ge 2k$, any intersecting family $\mathcal{F} \subset \binom{[n]}{k}$ has size at most $\binom{n-1}{k-1}$. Furthermore, when $n > 2k$, equality holds if and only if $\mathcal{F}$ is a  {full star} $\mathcal{S}_x := \{F \in \binom{[n]}{k} : x \in F\}$ for some $x \in [n]$.
\end{theorem}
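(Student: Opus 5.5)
I would prove the Erd\H{o}s--Ko--Rado theorem with Katona's cyclic permutation argument. It gives the bound $\binom{n-1}{k-1}$ by double counting. The uniqueness of the full star for $n>2k$ then comes from a closer look at the equality case.

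\emph{Upper bound.} Fix a cyclic ordering $\sigma$ of $[n]$ and call a $k$-set an \emph{arc} of $\sigma$ if its elements are consecutive in $\sigma$. There are exactly $n$ arcs. The key lemma is that for $n\ge 2k$, an intersecting family contains at most $k$ arcs of $\sigma$. To see this, suppose $A$ is an arc in $\mathcal{F}$. Every other arc meeting $A$ starts or ends strictly inside $A$, at one of the $k-1$ internal "cut points" of $A$. For each cut point there are two arcs using it, one ending just before it and one starting just after it. Since $n\ge 2k$, these two arcs are disjoint, so $\mathcal{F}$ contains at most one of each pair. This gives at most $1+(k-1)=k$ arcs.

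Now double count pairs $(\sigma,F)$ with $F\in\mathcal F$ an arc of $\sigma$. There are $(n-1)!$ cyclic orders, and each $k$-set is an arc in $k!(n-k)!$ of them. Hence
\[
|\mathcal F|\,k!\,(n-k)! \le k\,(n-1)!,
\qquad\text{so}\qquad
|\mathcal F|\le \binom{n-1}{k-1}.
\]

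\emph{Equality for $n>2k$.} This is the main obstacle. If equality holds, then every cyclic order contains exactly $k$ arcs of $\mathcal F$. First I refine the lemma: when $n>2k$ and exactly $k$ arcs occur, these arcs must be the $k$ arcs through a common element. The reason is that once $A$ and one choice per cut point are fixed, consistency between consecutive cut points forces the chosen arcs to be all those containing a fixed point of $A$. The check is that if the arc ending at cut $i$ and the arc starting at cut $j>i$ were both chosen, they would be disjoint when $n>2k$, because the slack $n-2k\ge1$ gives room.

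Next I globalize. For a set $F\in\mathcal F$, I consider cyclic orders in which $F$ is an arc and vary the order by transpositions of adjacent elements. The common point of the $k$ arcs stays constant along such moves. This shows there is a single $x$ with $x\in F$ for all $F\in\mathcal F$.

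Finally, since $|\mathcal F|=\binom{n-1}{k-1}=|\mathcal S_x|$ and $\mathcal F\subseteq\mathcal S_x$, we get $\mathcal F=\mathcal S_x$.

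Alternatively, the equality case can be obtained from the Hilton--Milner bound: a non-trivial intersecting family has size at most $\binom{n-1}{k-1}-\binom{n-k-1}{k-1}+1<\binom{n-1}{k-1}$ for $n>2k$. However, that bound appears later in the paper, so I would give the direct argument above.
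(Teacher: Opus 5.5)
The paper does not prove this statement; it quotes it from \cite{E61} as background, so your argument has to stand on its own. The upper bound is the standard Katona cycle argument and is correct.

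The local equality lemma is right in substance, but the check you state is too strong. Write $L_i$ for the arc ending at $a_i$ and $R_j$ for the arc starting at $a_{j+1}$. These two arcs are disjoint only when $j-i\le n-2k$; for $n=2k+1$ and $j=i+2$ they meet after wrapping around the cycle. You only need $j=i+1$, though. Then $L_i\in\mathcal F$ excludes $R_{i+1}$, which forces $L_{i+1}\in\mathcal F$. By induction the chosen arcs are $R_1,\dots,R_{m-1},L_m,\dots,L_{k-1}$, which are exactly the arcs through $a_m$.

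The genuine gap is the globalization step. Your assertion that the common point $x_\sigma$ ``stays constant'' under adjacent transpositions is the whole content of the uniqueness part, and you give no reason for it. For a transposition involving $x_\sigma$ itself, it cannot be obtained by comparing the two orders. Suppose $x_\sigma=u$ is followed by $v$, let $S$ be the $k-1$ elements before $u$, and let $W$ be the $k-1$ elements after $v$. Swapping $u,v$ changes only two arcs, $S\cup\{u\}\mapsto S\cup\{v\}$ and $\{v\}\cup W\mapsto\{u\}\cup W$. Everything known from $\sigma$ is consistent with the new centre being $v$; this happens exactly when $S\cup\{v\}\in\mathcal F$ and $\{u\}\cup W\notin\mathcal F$. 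You cannot exclude that without already knowing $\mathcal F$ is a star.

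The repair is never to move the centre.
\begin{enumerate}
\item[(a)] Take a transposition of adjacent $u,v$ with $u,v\ne x_\sigma$. All arcs other than the two above are the same sets in $\sigma$ and $\sigma'$.
\item[(b)] Suppose the centre of $\sigma'$ sat at a position at cyclic distance $d\ge 1$ from the position of $x_\sigma$. Then $\min(d,k)$ arcs pass through the old position but not the new one, and likewise in the other direction. Each of these arcs is in $\mathcal F$ in one order but not the other, so each must be a changed arc.
\item[(c)] For $k\ge 2$ this forces $d=1$, and then the boundary between the old and new positions must be the boundary between $u$ and $v$. That gives $x_\sigma\in\{u,v\}$, a contradiction, so $x_{\sigma'}=x_\sigma$.
\item[(d)] Cyclic orders of $[n]$ correspond bijectively to linear orders of $[n]\setminus\{x\}$ read off starting after $x$. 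Hence transpositions avoiding $x$ connect all $(n-1)!$ cyclic orders, and every cyclic order has the same centre $x$.
\item[(e)] Each $F\in\mathcal F$ is an arc of some cyclic order, so $x\in F$. Since $|\mathcal F|=\binom{n-1}{k-1}=|\mathcal S_x|$, you get $\mathcal F=\mathcal S_x$.
\end{enumerate}
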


 Any subfamily of a full star is simply called a \emph{star}, which is often referred to as a \emph{trivial} intersecting family. A natural question arises when one prohibits the extremal configurations of the EKR theorem. An intersecting family $\mathcal{F}$ is called \emph{non-trivial} if it is not a star, or equivalently, $\bigcap_{F\in\mathcal{F}}F=\emptyset$. In 1967, Hilton and Milner \cite{HM67} determined the maximum size of a non-trivial intersecting family. They obtained
\begin{theorem}  [Hilton--Milner \cite{HM67}]\label{HM67}
For $n > 2k \ge 4$, a non-trivial intersecting family $\mathcal{F}\subset \binom{[n]}{k}$ has size at most
$$
h(n,k) = \binom{n-1}{k-1} - \binom{n-k-1}{k-1} + 1.
$$
Moreover, for $k\ge 4$, equality holds if and only if $\mathcal{F}$ is isomorphic to the so-called Hilton--Milner family:
$$
\mathcal{H}(n,k) \coloneqq \left\{ H \in \binom{[n]}{k} : 1 \in H,\ H \cap \{2,3,\ldots,k+1\} \neq \emptyset \right\} \cup \Big\{ \{2,3,\ldots,k+1\} \Big\}.
$$
\end{theorem}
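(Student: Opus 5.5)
We prove the Hilton--Milner bound by shifting followed by a covering count; the EKR theorem is the only earlier result we use. Take a non-trivial intersecting $\mathcal{F}\subset\binom{[n]}{k}$ of maximum size. We may assume it is maximal under inclusion and apply the shifting operators $S_{ij}$ ($i<j$). Shifting keeps the family intersecting and keeps its size, but it can make the family trivial. The standard remedy, due to Frankl, is to shift only while non-triviality survives. If some $S_{ij}$ would make $\mathcal{F}$ a star, we stop and analyse that situation directly. In that case every member of $\mathcal{F}$ meets $\{i,j\}$, and $\mathcal{F}$ is the union of a part through $i$ and a part through $j$. Its size can then be bounded by counting directly: the sets through $i$ missing $j$ must each meet every set through $j$ missing $i$, which is a cross-intersecting condition on $(k-1)$-sets of $[n]\setminus\{i,j\}$.

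When $\mathcal{F}$ remains non-trivial after all shifts, it is shifted. Since $1\notin\bigcap\mathcal{F}$, some $F\in\mathcal{F}$ avoids $1$, and by shiftedness we may take $F\supseteq\{2,\dots,k+1\}$, so $F=\{2,\dots,k+1\}$. Every member of $\mathcal{F}$ containing $1$ must meet $F$. This gives
\[
|\{G\in\mathcal{F}:1\in G\}|\le \binom{n-1}{k-1}-\binom{n-k-1}{k-1}.
\]
It remains to control the members avoiding $1$, using $\mathcal{F}(\bar 1)=\{G\in\mathcal{F}:1\notin G\}$ and $\mathcal{F}(1)=\{G\setminus\{1\}:1\in G\in\mathcal{F}\}$. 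These two families are cross-intersecting on $[2,n]$, the first consisting of $k$-sets and the second of $(k-1)$-sets. The key inequality to prove is the trade-off
\[
|\mathcal{F}(1)|+|\mathcal{F}(\bar 1)|\le \binom{n-1}{k-1}-\binom{n-k-1}{k-1}+1 \quad\text{whenever } \mathcal{F}(\bar 1)\neq\emptyset .
\]
One route is the Kruskal--Katona theorem, equivalently a Hilton-type cross-intersecting lemma. For $\mathcal{A}\subset\binom{[2,n]}{k}$ nonempty and $\mathcal{B}\subset\binom{[2,n]}{k-1}$ cross-intersecting with $n-1\ge 2k$, the sum $|\mathcal{A}|+|\mathcal{B}|$ is maximised when $|\mathcal{A}|=1$. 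The reason is that the function $|\mathcal{A}|\mapsto$ (maximum of $|\mathcal{B}|$) is given by the lexicographic shadow, and adding sets to $\mathcal{A}$ costs more $\mathcal{B}$-sets than it gains.

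The main obstacle is exactly this last inequality: showing that $|\mathcal{A}|+|\mathcal{B}|$ is maximised at $|\mathcal{A}|=1$ for all sizes of $\mathcal{A}$. I would prove it by induction on $n$, splitting on whether the element $n$ lies in the sets. The step $|\mathcal{A}|=1$ versus $|\mathcal{A}|=\binom{n-2}{k-1}$ must be checked as a boundary case, and it uses $n>2k$; at $n=2k$ the bound fails.

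For uniqueness when $k\ge4$, we trace the equality cases. Equality forces $|\mathcal{A}|=1$, and it forces the first part to be all sets through $1$ meeting $F$. We must also undo the shifts: equality is preserved under shifting, and one checks that the only preimages of $\mathcal{H}(n,k)$ under $S_{ij}$ that are non-trivial intersecting families of the same size are isomorphic copies of it. This holds for $k\ge4$. For $k=3$ the triangle-type family $\{F:|F\cap[3]|\ge2\}$ also attains the bound, which explains that restriction.
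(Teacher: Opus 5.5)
The paper does not prove this theorem; it quotes it from Hilton and Milner \cite{HM67} as background, so your argument has to stand on its own. Your two reductions are sound. If $S_{ij}$ would produce a star, then $\{i,j\}$ covers $\mathcal F$. The traces $\{F\setminus\{i\}: F\in\mathcal F,\ F\cap\{i,j\}=\{i\}\}$ and $\{F\setminus\{j\}: F\in\mathcal F,\ F\cap\{i,j\}=\{j\}\}$ are then nonempty cross-intersecting $(k-1)$-uniform families on $n-2$ points. The equal-uniformity Hilton--Milner sum bound $\binom{n-2}{k-1}-\binom{n-k-1}{k-1}+1$ for them, plus at most $\binom{n-2}{k-2}$ sets through $\{i,j\}$, gives exactly $h(n,k)$. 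That sum bound is a theorem in its own right, not a direct count. In the shifted case, $\{2,\dots,k+1\}\in\mathcal F$ is also correct.

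The gap is your key trade-off lemma, which is false as stated. Take $\mathcal B=\{B_0\}$ with $B_0\in\binom{[2,n]}{k-1}$, and let $\mathcal A$ be all $k$-subsets of $[2,n]$ meeting $B_0$. Then $|\mathcal A|+|\mathcal B|=\binom{n-1}{k}-\binom{n-k}{k}+1$; for instance $17>13=h(7,3)$. For pairs with both families nonempty, the sum is in fact largest at the opposite end, $|\mathcal B|=1$. Adding that $\mathcal A$ is intersecting does not help. Take $\mathcal A=\{A\in\binom{[2,n]}{k}:2\in A\}$ and $\mathcal B=\{B\in\binom{[2,n]}{k-1}:2\in B\}$, the two halves of the full star at $2$. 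They give $|\mathcal A|+|\mathcal B|=\binom{n-1}{k-1}=h(n,k)+\binom{n-k-1}{k-1}-1>h(n,k)$. This is exactly your boundary case $|\mathcal A|=\binom{n-2}{k-1}$, and the comparison goes the wrong way. So no induction on $n$ can derive the inequality from cross-intersection (even with intersection of $\mathcal A$) alone.

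What is missing is the structure that shiftedness of the whole family imposes, which excludes such pairs. For instance, stability under $S_{1x}$ gives $G\setminus\{x\}\in\mathcal F(1)$ for all $G\in\mathcal F(\bar 1)$ and $x\in G$, i.e.\ $\partial\mathcal F(\bar 1)\subset\mathcal F(1)$. Together with cross-intersection, this forces $|G\cap G'|\ge 2$ for all $G,G'\in\mathcal F(\bar 1)$, which rules out both examples above. The trade-off lemma must be reformulated with a hypothesis of this kind. Proving that reformulated statement is the real content of the shifted case, and your Kruskal--Katona heuristic does not supply it.

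The uniqueness part is likewise only sketched. You must also analyse equality in the stopping case where $\{i,j\}$ covers $\mathcal F$, via the equality cases of the sum lemma. The claim that every non-trivial intersecting $S_{ij}$-preimage of $\mathcal H(n,k)$ is isomorphic to it for $k\ge 4$ is asserted rather than verified.
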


Another direction in extremal set theory is to investigate multiple families. Two families $\mathcal{F}, \mathcal{G} \subset 2^{[n]}$ are called \emph{cross-intersecting} if $F \cap G \neq \emptyset$ for all $F \in \mathcal{F}$ and $G \in \mathcal{G}$. Similarly, a pair of cross-intersecting families $\mathcal{F}, \mathcal{G}$ is called \emph{non-trivial} if $\bigcap_{F\in\mathcal{F}}F=\emptyset$ and $\bigcap_{G\in\mathcal{G}}G=\emptyset$. For cross-intersecting families, bounding the sum and the product of their sizes has attracted extensive attention \cite{HW77, FT92, BF22, F24}. In this paper, we focus on the product versions.

For the uniform setting, Pyber \cite{P86} (see Frankl and Kupavskii \cite{FK17} for an elegant short proof) proved
  \begin{theorem} [Pyber \cite{P86}, Frankl--Kupavskii \cite{FK17}] \label{p86}
   If $\mathcal{F}, \mathcal{G} \subset \binom{[n]}{k}$ are cross-intersecting and $n \ge 2k \ge 4$, then $|\mathcal{F}||\mathcal{G}| \le \binom{n-1}{k-1}^2$. For $n>2k$, the maximum is attained if and only if both families are full stars centered at the same element.
\end{theorem}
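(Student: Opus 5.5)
We follow the Frankl--Kupavskii approach: shift to reduce to a structured configuration, then finish with a convexity argument along a lexicographic chain.

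\textbf{Reduction to shifted families.} Replace $\mathcal{F},\mathcal{G}$ by the same sequence of $(i,j)$-shifts applied simultaneously to both. Each shift preserves $|\mathcal{F}|$, $|\mathcal{G}|$ and the cross-intersecting property, so we may assume both families are shifted. We may also assume each family is maximal given the other, so $\mathcal{G}=\{G\in\binom{[n]}{k}: G\cap F\ne\emptyset \ \forall F\in\mathcal{F}\}$, and symmetrically.

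\textbf{Key intermediate claim.} The claim to aim for is that the product is maximized when $\mathcal{F}$ is an initial segment of the lexicographic order on $\binom{[n]}{k}$. We then have $\mathcal{G}$ equal to its largest cross-intersecting partner, which by Hilton's lex lemma is also lex-initial, and it suffices to compare lex pairs. To justify the reduction, invoke the Kruskal--Katona/Hilton lemma: if $\mathcal{F},\mathcal{G}$ are cross-intersecting, so are the lex-initial families $\mathcal{L}(|\mathcal{F}|)$ and $\mathcal{L}(|\mathcal{G}|)$. This is proved via complements and the shadow bound, and for $n\ge 2k$ I assume it as the classical result. It reduces everything to a one-parameter problem: for $|\mathcal{F}|=m$, the largest possible $|\mathcal{G}|$ is $g(m)$, the number of $k$-sets meeting every member of $\mathcal{L}(m)$.

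\textbf{One-parameter optimization, the main obstacle.} We must show $m\,g(m)\le\binom{n-1}{k-1}^2$, with equality only at $m=\binom{n-1}{k-1}$, that is, when $\mathcal{L}(m)$ is the full star at $1$. By symmetry assume $m\le g(m)$, which forces $m\le\binom{n-1}{k-1}$, since two lex families larger than the star cannot cross-intersect when $n\ge 2k$. Write $\mathcal{L}(m)\subset\mathcal{S}_1$. Proceed along the chain of ``nice'' lex families $\mathcal{F}_r=\{F:1\in F,\ F\cap[2,r]\ne\emptyset\}$, and $\{[2,k+1]\}$-type endpoints. Their partners are $\mathcal{G}_r=\mathcal{S}_1\cup\{G: [2,r]\subset G\}$. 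The plan is to show that between consecutive nice families the product is a ratio of linear functions that is monotone or convex in $m$, so the maximum occurs at nice families. Then compare directly:
\[
|\mathcal{F}_r||\mathcal{G}_r|=\Bigl(\binom{n-1}{k-1}-\binom{n-r}{k-1}\Bigr)\Bigl(\binom{n-1}{k-1}+\binom{n-r}{k-r+1}\Bigr)<\binom{n-1}{k-1}^2
\]
for $2\le r\le k+1$. This reduces to $\binom{n-r}{k-r+1}\binom{n-1}{k-1}<\binom{n-r}{k-1}\cdot(\text{the first factor})$, which follows from $\binom{n-r}{k-r+1}\le\binom{n-r}{k-1}\cdot\frac{k-1}{n-1}$-type ratio bounds when $n\ge 2k$. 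The delicate part is the convexity between nice families and the boundary case $n=2k$. For $n=2k$, the inequality may be non-strict, which matches the theorem claiming uniqueness only for $n>2k$.

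\textbf{Uniqueness.} For $n>2k$, all the above inequalities are strict except at the full star, so the shifted families must both be $\mathcal{S}_1$. Finally, we pull back through the shifts. If a shift produces a pair of full stars $\mathcal{S}_1$ from a pair that was not, then before the shift both families were stars centered at a common element, because a shift of a non-star maximal pair cannot yield a full star when $n>2k$. This is a standard short check.
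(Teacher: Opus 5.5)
The paper only quotes this theorem and gives no proof, so I am judging your argument on its own. It has a genuine gap exactly where the theorem is hard.

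\textbf{The interior of each interval.} Your reduction via Hilton's lemma to the one-variable quantity $m\,g(m)$ is sound; this is the classical route. Your values at the nice families are also correct. Write $A=\binom{n-1}{k-1}$, $a_r=\binom{n-r}{k-1}$ and $b_r=\binom{n-r}{k-r+1}$. Then $(A-a_r)(A+b_r)<A^2$ holds iff $b_r(A-a_r)<a_rA$, and this follows from $b_r\le a_r$, which is true for $n\ge 2k$.

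Two small corrections here. The inequality you display, $b_rA<a_r(A-a_r)$, is not equivalent, and it is false at $r=2$, where $a_2=b_2$. Also, the nice-family products are strictly below $A^2$ even when $n=2k$.

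The real problem is every $m$ strictly between $|\mathcal{F}_r|$ and $|\mathcal{F}_{r+1}|$. There you only state a plan, and the heuristic behind it is wrong.

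\begin{itemize}
\item Since $g$ is a non-increasing step function, $m\,g(m)$ is maximized at mutually maximal lex pairs.
\item Between $\mathcal{F}_r$ and $\mathcal{F}_{r+1}$ these pairs consist of $\mathcal{F}_r\cup\{\{1,r+1\}\cup H: H\in\mathcal{H}\}$, with $\mathcal{H}\subset\binom{[r+2,n]}{k-2}$ lex-initial. The partner is $\mathcal{G}_{r+1}\cup\{[2,r]\cup G': G'\in\mathcal{G}'\}$, where $\mathcal{G}'$ is the family of $(k-r+1)$-subsets of $[r+2,n]$ meeting every member of $\mathcal{H}$.
\item So each interval hides a new cross-intersecting problem: uniformities $k-2$ and $k-r+1$ on $n-r-1$ points, with weighted objective $(|\mathcal{F}_r|+|\mathcal{H}|)(|\mathcal{G}_{r+1}|+|\mathcal{G}'|)$. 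The same recursive structure repeats inside it, and nothing there is a ratio of linear functions.
\item Even if $g$ were linear on an interval, the product of an increasing and a decreasing linear function is concave. Concavity favours interior maxima, the opposite of what you need.
\item Monotonicity of $g$ alone cannot settle the interval either. For $n=2k$ and $t=k/(2k-1)$ one gets $|\mathcal{F}_3|\,|\mathcal{G}_2|=A^2\bigl(1+\tfrac{t(1-t)}{2}\bigr)>A^2$.
\end{itemize}

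Closing this step needs a genuinely quantitative ingredient that controls $g$ on these intervals. Examples would be sharp Kruskal--Katona-type shadow estimates for the complement families, or an inductive statement for cross-intersecting families of unequal uniformities with a weighted product. The proposal supplies neither.

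\textbf{The uniqueness part.} Hilton's lemma transfers only the sizes. So even a complete one-variable analysis yields only $|\mathcal{F}|=|\mathcal{G}|=\binom{n-1}{k-1}$ in the equality case; it says nothing about your shifted families being $\mathcal{S}_1$. You still have to prove that, for $n>2k$, a cross-intersecting pair with both sizes equal to $\binom{n-1}{k-1}$ is a pair of equal full stars.

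The unshifting step also has to be carried out rather than called standard. For a $(1,j)$-shift, show that $\{T:\{1\}\cup T\in\mathcal{F}\}$ is either empty or all of $\binom{[n]\setminus\{1,j\}}{k-1}$. The key fact is that two $(k-1)$-sets differing in one element have a common disjoint $(k-1)$-set inside $[n]\setminus\{1,j\}$ when $n\ge 2k+1$.
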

\noindent  Pyber \cite{P86} also considered the unequal-uniformity generalization, showing that for $\mathcal{F} \subset \binom{[n]}{k}$ and $\mathcal{G} \subset \binom{[n]}{\ell}$ with $k \ge \ell$,  $|\mathcal{F}||\mathcal{G}|\leq \binom{n-1}{k-1}\binom{n-1}{\ell-1}$ provided $n \ge 2k+\ell-2$. This range was later improved to  $n \ge 2k$ and  $n \ge 2\ell$ by Matsumoto and Tokushige \cite{MT89}.

Recently, along the lines of Theorems \ref{HM67} and \ref{p86},
 Frankl and Wang \cite{FW23}  studied  the product analog of  non-trivial  cross-intersecting families.  They obtained
\begin{theorem} [Frankl--Wang \cite{FW23}]
 For non-trivial cross-intersecting families $\mathcal{F}, \mathcal{G} \subset \binom{[n]}{k}$, $|\mathcal{F}||\mathcal{G}| \le h(n,k)^2$ holds for $n \ge 4k$ and $k \ge 8$.
 \end{theorem}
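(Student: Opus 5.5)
My plan for the Frankl--Wang bound is to split according to the sizes of the maximum degrees of $\mathcal F$ and $\mathcal G$, and to compare each case with the Hilton--Milner-type pair. For that pair, $\mathcal F=\mathcal G=\mathcal H(n,k)$, both families are cross-intersecting and non-trivial, and the product equals $h(n,k)^2$. Fix non-trivial cross-intersecting $\mathcal F,\mathcal G\subset\binom{[n]}{k}$ with maximal product. We may assume both families are maximal under inclusion: enlarging a family keeps the pair cross-intersecting, and it also keeps it non-trivial as long as we only add sets. By maximality, $\mathcal F=\{F: F\cap G\neq\emptyset\ \forall G\in\mathcal G\}$, and symmetrically for $\mathcal G$.

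\textbf{Case 1: both families have a popular element.} Suppose $x$ lies in almost all sets of $\mathcal F$, say all but at most $\binom{n-k-1}{k-1}$ of them. Let $\mathcal F(\bar x)=\{F\in\mathcal F: x\notin F\}$; it is non-empty by non-triviality. Every $G\in\mathcal G$ with $x\notin G$ must meet every member of $\mathcal F(\bar x)$. Non-triviality of $\mathcal G$ forces some $G_0\not\ni x$ to exist, and then every $F\ni x$ in $\mathcal F$ must meet $G_0$. This costs $\mathcal F$ at least $\binom{n-k-1}{k-1}$ sets among those containing $x$.

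I then make the trade-off precise. Write $a=|\mathcal F(\bar x)|$ and $b=|\mathcal G(\bar y)|$. If $x=y$, then $|\mathcal F|\le \binom{n-1}{k-1}-|\{F\ni x: F\cap G=\emptyset \text{ for some } G\in\mathcal G(\bar x)\}|+a$. A Kruskal--Katona/shadow-type estimate (the Frankl--Tokushige bound on the union of "forbidden" sets) shows that the number of excluded sets grows at least as fast as $a$ grows, once $n\ge 4k$. This yields $|\mathcal F|\le h(n,k)$ and $|\mathcal G|\le h(n,k)$ simultaneously, with equality only when $a=b=1$. If $x\neq y$, then sets of $\mathcal F$ avoiding $y$ must meet all of $\mathcal G$, which is mostly a star at $y$. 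This forces heavy losses, and a direct count gives a product far below $h(n,k)^2$ for $n\ge 4k$.

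\textbf{Case 2: some family, say $\mathcal F$, has small maximum degree.} Here I use Frankl's degree theorem: an intersecting-type structure with $\Delta(\mathcal F)\le |\mathcal F|-\binom{n-k-1}{k-1}$ has bounded size. More usefully, I consider the covering number $\tau(\mathcal G)\ge 2$, which holds by non-triviality. Every $F$ must be a cover of $\mathcal G$. If $\tau(\mathcal G)\ge 3$, then $|\mathcal F|\le k^3\binom{n-3}{k-3}$, while $|\mathcal G|\le\binom{n}{k}$ is useless, so I will need a symmetric treatment. If both covering numbers are at least $3$, then both sizes are at most $k^3\binom{n-3}{k-3}$, which is far below $h(n,k)\approx k\binom{n-2}{k-2}$ when $n\ge 4k$, $k\ge 8$. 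If $\tau(\mathcal F)=2$, then the $2$-covers of $\mathcal F$ determine $\mathcal G$ up to $k^3\binom{n-3}{k-3}$ sets. I then analyze the graph of $2$-covers, splitting on whether it is a star, a triangle, or has a matching of size $2$, and bound $|\mathcal G|$ by roughly (number of edges)$\cdot\binom{n-2}{k-2}$. I play this off against $|\mathcal F|$, which shrinks when the $2$-cover graph of $\mathcal F$ is rich.

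I expect the main obstacle to be this intermediate regime. There the product of the two estimates has to be pushed below $h(n,k)^2\approx k^2\binom{n-2}{k-2}^2$ with only $n\ge 4k$, so the constants must be tracked carefully. The hypothesis $k\ge 8$ is presumably what makes the inequality $k^3\binom{n-3}{k-3}\cdot\binom{n-1}{k-1}<h(n,k)^2$ and its variants hold. I would first try to prove the inequality $h(n,k)\ge(1-o(1))k\binom{n-2}{k-2}$ with explicit constants for $n\ge 4k$, and reduce each subcase to that.
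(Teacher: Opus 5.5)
The paper does not prove this statement; it only quotes it from \cite{FW23}. The nearest argument in the paper is the covering/$2$-cover-graph method of Section~\ref{large-n}, which your Case~2 essentially reproduces. That method is the reason Theorem~\ref{th4.1} needs $n>100\ell k^2$, and Lemma~\ref{lem4.6} already needs $n\ge 4k^2$. The method is asymptotic in $n/k^2$ and becomes vacuous when $n$ is linear in $k$. Since $\binom{n-3}{k-3}=\frac{k-2}{n-2}\binom{n-2}{k-2}$, at $n=4k$ your bound $k^3\binom{n-3}{k-3}\approx\frac{k^3}{4}\binom{n-2}{k-2}$ exceeds even $\binom{n}{k}=\frac{n(n-1)}{k(k-1)}\binom{n-2}{k-2}<18\binom{n-2}{k-2}$ once $k\ge 8$. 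Likewise, the contribution $\sum_{r\ge 3}k^r\binom{n-r}{k-r}$ of larger minimal covers, which you must discard when $\tau=2$, is not of lower order unless $n\gtrsim k^2$. Your intended anchor inequality is also false. At $n=4k$ one has $h(n,k)<\binom{n-1}{k-1}=\frac{n-1}{k-1}\binom{n-2}{k-2}<4.5\binom{n-2}{k-2}$, nowhere near $k\binom{n-2}{k-2}$. Moreover $k^3\binom{n-3}{k-3}\binom{n-1}{k-1}\approx k^3\binom{n-2}{k-2}^2$ exceeds $h(n,k)^2$ by a factor that grows with $k$, so the hypothesis $k\ge 8$ cannot be what rescues it. No careful tracking of constants fixes this. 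In the linear range you need exact, non-asymptotic tools, for instance shifting together with Kruskal--Katona/Hilton-type bounds for the cross-intersecting pairs $(\mathcal F(\bar x),\mathcal G(x))$ and $(\mathcal G(\bar x),\mathcal F(x))$.

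Case~1 is also unsound as written. The members of $\mathcal F(x)$ you lose are those disjoint from some member of $\mathcal G(\bar x)$, so the loss is governed by $b=|\mathcal G(\bar x)|$, not by $a$. Let $E(m)$ be the minimum number of $(k-1)$-subsets of $[n]\setminus\{x\}$ disjoint from some member of an $m$-element family of $k$-subsets of $[n]\setminus\{x\}$. Then one only has $|\mathcal F|\le a+\binom{n-1}{k-1}-E(b)$ and $|\mathcal G|\le b+\binom{n-1}{k-1}-E(a)$. These are coupled crosswise, so the trade-off must be carried out on the product, not factor by factor.

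The separate bounds $|\mathcal F|,|\mathcal G|\le h(n,k)$ that you claim are false. Take $G_0=\{2,\dots,k+1\}$, $\mathcal G=\{G_0\}\cup\{G\ni 1:G\cap\{2,3\}\ne\emptyset\}$ and $\mathcal F=\{F\ni 1:F\cap G_0\ne\emptyset\}\cup\{F\not\ni 1:\{2,3\}\subset F\}$. This pair is maximal, non-trivial and cross-intersecting. Once $n\ge k^2$, the element $1$ is popular in both families in your sense. Yet $|\mathcal F|=h(n,k)-1+\binom{n-3}{k-2}>h(n,k)$.

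Finally, your popularity threshold $\binom{n-k-1}{k-1}$ carries no information for large $n$. For $n\ge 3k^2$ it exceeds $h(n,k)$, so every element is popular in $\mathcal H(n,k)$. Choosing $x=1$, $y=2$ then places the extremal pair itself in your subcase $x\ne y$, where you assert the product is far below $h(n,k)^2$.
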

\noindent    They subsequently improved the range to $n \ge 2k+1$ for $k \ge 8$ in \cite{FW26}. Frankl \cite{F26} also investigated the $3$-graph version. Recently, Huang \cite{H26} successfully resolved the uniform case for all $n \ge 2k$ and $k \ge 3$.

To formulate the conjecture in the unequal-uniformity setting, we need the following families.
 Following the symbols of Frankl and Wang \cite{FW26}, let $X \in \binom{[2,n]}{k}$ and $Y \in \binom{[2,n]}{\ell}$  such that $X \cap Y \neq \emptyset$. We define the corresponding families as follows:
$$
\mathcal{H}_k(X,Y) \coloneqq \{X\} \cup \left\{ F\in \binom{[n]}{k} : 1\in F,\ F\cap Y\ne\emptyset \right\},
$$
which has size
$$
M_k(n,k,\ell) \coloneqq \binom{n-1}{k-1} - \binom{n-\ell-1}{k-1} + 1.
$$
Similarly, we define
$$
\mathcal{H}_\ell(Y,X) \coloneqq \{Y\} \cup \left\{ G\in \binom{[n]}{\ell} : 1\in G,\ G\cap X\ne\emptyset \right\},
$$
which has size
$$
M_\ell(n,k,\ell) \coloneqq \binom{n-1}{\ell-1} - \binom{n-k-1}{\ell-1} +1.
$$
It is straightforward to verify that $\mathcal{H}_k(X,Y)$ and $\mathcal{H}_\ell(Y,X)$ form a non-trivial cross-intersecting pair. Based on this, Frankl and Wang \cite{FW26} proposed the following conjecture:

\begin{conjecture}[Frankl--Wang \cite{FW26}]\label{conj:FW}
Let $n \ge 2k > 2\ell \ge 4$. Suppose that $\mathcal{F} \subset \binom{[n]}{k}$ and $\mathcal{G} \subset \binom{[n]}{\ell}$ are non-trivial cross-intersecting families. Then
$$
|\mathcal{F}||\mathcal{G}| \le M_k(n,k,\ell)M_\ell(n,k,\ell).
$$
\end{conjecture}


Our first main result  Theorem \ref{thm:counter-interval}  demonstrates that Conjecture \ref{conj:FW} is  false in general. In Section \ref{Refconj:FW}, we introduce a   two-center construction. We prove that for every fixed $\ell \ge 3$ and all sufficiently large $k$,
this construction yields a strictly larger product than the Hilton--Milner-type pair for $2k+1 \le n \le (c_\ell - \epsilon)k$ for any $0 < \epsilon < c_\ell - 2$, where $c_\ell > 2$ is an explicit constant. This provides a robust family of counterexamples to the conjecture. We would like to mention that Huang \cite{H26} also provide a counterexample for $n=18$.

Our second main result is Theorem  \ref{th4.1} in Section \ref{large-n}. We confirm that when  $n > 100\ell k^2$, Conjecture \ref{conj:FW} holds. Furthermore, we characterize all extremal configurations. Our proofs  rely on estimating the sizes of minimal covers and  analyzing the structural properties of $2$-cover graphs.	

\section{A counterexample for Conjecture \ref{conj:FW}}\label{Refconj:FW}

\subsection{A two-center construction for general $n$} \label{construction}

In this subsection, we present a   two-center construction for $n \ge 2k+1$, aiming to disprove Conjecture \ref{conj:FW} in a specific range for $\ell \ge 3$, which will be presented in Subsection \ref{counterexample}.

We first  construct two set families.

Let $\ell \ge 3$ and $k > \ell$. Suppose $n \ge 2k+1$. Since $|[3,n]| = n-2 \ge 2k-1 > 2(k-1)$, we may choose two disjoint $(k-1)$-element subsets $A, B \subset [3,n]$.
In light of $A$ and $B$, we define two special $k$-element sets:
$$
F_A \coloneqq \{2\} \cup A, \qquad F_B \coloneqq \{1\} \cup B.
$$
We define our first family $\mathcal{F} \subset \binom{[n]}{k}$ as
$$
\mathcal{F} \coloneqq \left\{ F \in \binom{[n]}{k} : \{1,2\} \subset F \right\} \cup \{F_A, F_B\}.
$$
Next, we define our second family $\mathcal{G} \subset \binom{[n]}{\ell}$ by partitioning it into three parts:
\begin{align*}
	\mathcal{G}_{12} &\coloneqq \left\{ G \in \binom{[n]}{\ell} : \{1,2\} \subset G \right\}, \\[1ex]
	\mathcal{G}_1 &\coloneqq \left\{ G \in \binom{[n]}{\ell} : 1 \in G,\ 2 \notin G,\ G \cap A \neq \emptyset \right\}, \\[1ex]
	\mathcal{G}_2 &\coloneqq \left\{ G \in \binom{[n]}{\ell} : 2 \in G,\ 1 \notin G,\ G \cap B \neq \emptyset \right\}.
\end{align*}
We let $\mathcal{G} \coloneqq \mathcal{G}_{12} \cup \mathcal{G}_1 \cup \mathcal{G}_2$. We could prove that $\mathcal{F}$ and $\mathcal{G}$ have the following basic properties.

\begin{theorem}\label{thm:construction}
	The families $\mathcal{F}$ and $\mathcal{G}$ constructed above are non-trivial cross-intersecting families. Moreover, their sizes are
	$$
	|\mathcal{F}| = \binom{n-2}{k-2} + 2,
	$$
	and
	$$
	|\mathcal{G}| = \binom{n-2}{\ell-2} + 2\left( \binom{n-2}{\ell-1} - \binom{n-k-1}{\ell-1} \right).
	$$
\end{theorem}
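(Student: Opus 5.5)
The proof splits into three checks: cross-intersection, non-triviality, and the two size counts. All are direct verifications from the definitions, and the only real care is in the bookkeeping for $|\mathcal{G}|$.

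\emph{Cross-intersection.} I will take an arbitrary $F\in\mathcal{F}$ and $G\in\mathcal{G}$ and split according to the type of $F$.
\begin{itemize}
\item If $\{1,2\}\subset F$, then every $G\in\mathcal{G}$ contains $1$ or $2$, so $F\cap G\neq\emptyset$.
\item If $F=F_A=\{2\}\cup A$, then $G\in\mathcal{G}_{12}\cup\mathcal{G}_2$ contains $2$. Every $G\in\mathcal{G}_1$ meets $A$ by definition.
\item The case $F=F_B=\{1\}\cup B$ is symmetric: $G\in\mathcal{G}_{12}\cup\mathcal{G}_1$ contains $1$, and every $G\in\mathcal{G}_2$ meets $B$.
\end{itemize}

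\emph{Non-triviality.} For $\mathcal{F}$, the intersection $F_A\cap F_B$ is empty, since $A\cap B=\emptyset$ and $1,2\notin A\cup B$. Hence $\bigcap_{F\in\mathcal{F}}F=\emptyset$.

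For $\mathcal{G}$, I first note that $\mathcal{G}_1$ and $\mathcal{G}_2$ are nonempty. For instance, $\{1,a\}\cup S\in\mathcal{G}_1$ for any $a\in A$ and any $(\ell-2)$-set $S\subset[3,n]\setminus\{a\}$, which exists because $n-3\ge \ell-2$. A member of $\mathcal{G}_1$ avoids $2$ and a member of $\mathcal{G}_2$ avoids $1$. So the common intersection can only contain elements $x\ge 3$. To rule these out, I will show that for each $x\in[3,n]$ some member of $\mathcal{G}$ avoids $x$. Take $\{1,2\}\cup S$ with $S\subset[3,n]\setminus\{x\}$ of size $\ell-2$; this is possible since $n-3\ge\ell-2$. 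Alternatively, $\{1\}\cup T$ with $T$ an $(\ell-1)$-set disjoint from $\{2,x\}$ meeting $A$ also works. Either way, $\bigcap_{G\in\mathcal{G}}G=\emptyset$.

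\emph{Sizes.} The $k$-sets containing $\{1,2\}$ number $\binom{n-2}{k-2}$. The sets $F_A$ and $F_B$ each miss one of $1,2$, so they are distinct and not already counted. This gives $|\mathcal{F}|=\binom{n-2}{k-2}+2$.

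For $\mathcal{G}$, the three parts are pairwise disjoint, distinguished by which of $1,2$ they contain, and $|\mathcal{G}_{12}|=\binom{n-2}{\ell-2}$.

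To count $\mathcal{G}_1$, I write $G=\{1\}\cup T$ with $T\in\binom{[3,n]}{\ell-1}$ and $T\cap A\neq\emptyset$. There are $\binom{n-2}{\ell-1}$ choices of $T$ in total. The ones avoiding $A$ are chosen from $[3,n]\setminus A$, which has size $n-2-(k-1)=n-k-1$. Hence $|\mathcal{G}_1|=\binom{n-2}{\ell-1}-\binom{n-k-1}{\ell-1}$. The same count applies to $\mathcal{G}_2$ with $B$ in place of $A$. Summing the three parts gives the stated formula for $|\mathcal{G}|$.

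The main point of care is this complement count: $T$ ranges over $[3,n]$, not $[2,n]$, because $2\notin G$ is required, and this is what produces $n-k-1$ rather than $n-k$.
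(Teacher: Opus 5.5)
Your proof is correct and follows the paper's argument essentially step for step: the same case split on $F$ for cross-intersection, the observation $F_A\cap F_B=\emptyset$, and the same complement count over $[3,n]\setminus A$ for $|\mathcal{G}_1|$. The only deviation is in the non-triviality of $\mathcal{G}$. There you exclude each $x\ge 3$ using a member $\{1,2\}\cup S$ of $\mathcal{G}_{12}$. This is slightly cleaner than the paper's route, which uses $\{1\}\cup A'$ with $A'\subset A\setminus\{x\}$ (needing $|A|=k-1\ge \ell$) and handles $x\notin A\cup B$ as a separate case.
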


\begin{proof}
	By construction, $\mathcal{F} \subset \binom{[n]}{k}$ and $\mathcal{G} \subset \binom{[n]}{\ell}$.
We divide the proof into three parts.
\vspace{2mm}
	
	{\bf First, we show that $\mathcal{F}$ and $\mathcal{G}$ are cross-intersecting}, i.e.,  $F \cap G \neq \emptyset$ for any $F \in \mathcal{F}$ and $G \in \mathcal{G}$.
	Observe that every member containing $\{1,2\}$ of   $\mathcal{F}$   intersects all $G \in \mathcal{G}$, this is because every set in $\mathcal{G}$ contains at least one of $1$ and $2$.
	It remains to check $F_A$ and $F_B$. Consider $F_A = \{2\} \cup A$. For any $G \in \mathcal{G}_{12} \cup \mathcal{G}_2$, we have $2 \in F_A \cap G$. For any $G \in \mathcal{G}_1$,   $G \cap A \neq \emptyset$, which implies $\emptyset\ne A\cap G\subset F_A\cap G$.
	Similarly, for $F_B = \{1\} \cup B$, we have $1 \in F_B \cap G$ whenever $G \in \mathcal{G}_{12} \cup \mathcal{G}_1$. If $G \in \mathcal{G}_2$, then $G \cap B \neq \emptyset$, yielding $F_B \cap G \supset B \cap G \neq \emptyset$. Thus, $\mathcal{F}$ and $\mathcal{G}$ are cross-intersecting.
\vspace{2mm}	

	{\bf Next, we show that they are non-trivial}. For $\mathcal{F}$, since $A$ and $B$ are disjoint subsets of $[3,n]$, we have $F_A \cap F_B = (\{2\} \cup A) \cap (\{1\} \cup B) = \emptyset$. This implies $\bigcap_{F \in \mathcal{F}} F = \emptyset$. For $\mathcal{G}$, we will show that no element of $[n]$ belongs to all members of $\mathcal{G}$. Elements $1$ and $2$ are excluded by the definitions of $\mathcal{G}_2$ and $\mathcal{G}_1$, respectively. For any $x \in A$, since $|A| = k-1 \ge \ell > \ell-1$, we can choose an $(\ell-1)$-subset $A' \subset A \setminus \{x\}$. The set $G = \{1\} \cup A'$ satisfies $1 \in G, 2 \notin G$, and $G \cap A = A' \neq \emptyset$. Hence, $G \in \mathcal{G}_1$, and it does not contain $x$.
	Similarly, for any $y \in B$, we can find a subset $B' \subset B \setminus \{y\}$ of size $\ell-1$, and the constructed set $G = \{2\} \cup B' \in \mathcal{G}_2$ does not contain $y$.
	Finally, if $z \notin \{1,2\} \cup A \cup B$, then none of the sets constructed above contains $z$. Therefore, $\bigcap_{G \in \mathcal{G}} G = \emptyset$.\vspace{2mm}
	
	 {\bf Finally, we compute their sizes}.
 The size of $\mathcal{F}$ is easy to obtain.
 For $\mathcal{G}$, the subfamilies $\mathcal{G}_{12}, \mathcal{G}_1$, and $\mathcal{G}_2$ are pairwise disjoint. Clearly, $|\mathcal{G}_{12}| = \binom{n-2}{\ell-2}$.
	To determine $|\mathcal{G}_1|$, we consider the $\binom{n-2}{\ell-1}$ subsets of size $\ell$ that contain $1$ but avoid $2$. Among these, the sets that are disjoint from $A$ must be chosen entirely from $[n] \setminus (\{1,2\} \cup A)$. Since $|A| = k-1$, there are $n-2-(k-1) = n-k-1$ such available elements. Consequently, there are $\binom{n-k-1}{\ell-1}$ sets that fail to intersect $A$. Subtracting this from the total gives
	$$
	|\mathcal{G}_1| = \binom{n-2}{\ell-1} - \binom{n-k-1}{\ell-1}.
	$$
	By similar reasoning for $B$, we obtain $|\mathcal{G}_2| = \binom{n-2}{\ell-1} - \binom{n-k-1}{\ell-1}$. Summing the sizes of these three disjoint parts yields the final formula for $|\mathcal{G}|$.
\end{proof}

	\subsection{The counterexample interval}\label{counterexample}

In this subsection, we will disprove Conjecture \ref{conj:FW} using the two-center construction presented in Subsection \ref{construction}.

Fix $\ell \ge 3$. Let $q_\ell \in (1/2, 1)$ be the unique real root of the polynomial equation
$$
1 + q + q^2 + \cdots + q^{\ell-1} = 2.
$$
The root $q_\ell$    is unique because the left-hand side of the equation is strictly increasing with respect to $q$. For $q = 1/2$, the sum is   $2 - 2^{1-\ell} < 2$, and for $q = 1$, the sum is $\ell \ge 3 > 2$.
We then define the absolute constant
$$
c_\ell \coloneqq \frac{1}{1-q_\ell}.
$$
  Note that $c_\ell > 2$ as $q_\ell > 1/2$.

\begin{theorem}\label{thm:counter-interval}
	Fix $\ell \ge 3$, and let $c_\ell$ be defined as above. For every $0 < \epsilon < c_\ell - 2$, there exists an integer $K = K(\ell, \epsilon)$ such that for every $k \ge K$ and every integer $n$ satisfying
	$
	2k+1 \le n \le (c_\ell - \epsilon)k,
	$
	the families $\mathcal{F}$ and $\mathcal{G}$ in Theorem \ref{thm:construction} satisfy
	$$
	|\mathcal{F}||\mathcal{G}| > M_k(n,k,\ell)M_\ell(n,k,\ell).
	$$
	Consequently, Conjecture \ref{conj:FW} is false for every fixed $\ell \ge 3$.
\end{theorem}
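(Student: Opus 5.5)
Our plan is a direct asymptotic comparison of the two products, with $\ell$ fixed, $k\to\infty$ and $n=\alpha k$ for $\alpha\in[2+1/k,\,c_\ell-\epsilon]$. By Theorem \ref{thm:construction},
$$|\mathcal{F}||\mathcal{G}|=\Big(\tbinom{n-2}{k-2}+2\Big)\Big(\tbinom{n-2}{\ell-2}+2\tbinom{n-2}{\ell-1}-2\tbinom{n-k-1}{\ell-1}\Big),$$
while $M_k=\binom{n-1}{k-1}-\binom{n-\ell-1}{k-1}+1$ and $M_\ell=\binom{n-1}{\ell-1}-\binom{n-k-1}{\ell-1}+1$. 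We compare the two $k$-uniform factors and the two $\ell$-uniform factors separately.

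For the $k$-uniform factors, write $\binom{n-1}{k-1}-\binom{n-\ell-1}{k-1}=\sum_{i=1}^{\ell}\binom{n-1-i}{k-2}$, which counts sets containing $1$ according to the smallest element of $Y$ they hit. The ratio $\binom{n-1-i}{k-2}/\binom{n-2}{k-2}=\prod_{j=0}^{i-2}\frac{n-k-j}{n-2-j}$ tends to $q^{\,i-1}$ with $q=(n-k)/n=1-1/\alpha$, uniformly for $i\le\ell$ with error $O(\ell^2/k)$. Hence
$$M_k=\tbinom{n-2}{k-2}\Big(\textstyle\sum_{i=0}^{\ell-1}q^i+O(1/k)\Big),\qquad |\mathcal{F}|=\tbinom{n-2}{k-2}\big(1+o(1)\big),$$
since $\binom{n-2}{k-2}\to\infty$. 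For the $\ell$-uniform factors, note that $\binom{n-k-1}{\ell-1}/\binom{n-2}{\ell-1}\to q^{\ell-1}$ and that $\binom{n-2}{\ell-2}=O(\binom{n-2}{\ell-1}/k)$. This gives
$$|\mathcal{G}|=\tbinom{n-2}{\ell-1}\big(2(1-q^{\ell-1})+o(1)\big),\qquad M_\ell=\tbinom{n-1}{\ell-1}\big(1-q^{\ell-1}+o(1)\big),$$
and $\binom{n-1}{\ell-1}/\binom{n-2}{\ell-1}=1+O(\ell/n)$.

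Therefore the ratio satisfies
$$\frac{|\mathcal{F}||\mathcal{G}|}{M_kM_\ell}=\frac{2+o(1)}{S(q)+o(1)},\qquad S(q)=\sum_{i=0}^{\ell-1}q^i.$$
For this step we need $1-q^{\ell-1}$ to be bounded away from $0$. That holds because $q\le 1-1/(c_\ell-\epsilon)<q_\ell<1$. Since $S$ is increasing and $S(q_\ell)=2$, on the compact range $q\in[1/2-o(1),\,1-1/(c_\ell-\epsilon)]$ we get $S(q)\le 2-\delta$ for some $\delta=\delta(\ell,\epsilon)>0$. The ratio then exceeds $1$ once $k\ge K(\ell,\epsilon)$.

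The main obstacle is making every $o(1)$ uniform over the whole interval of $n$, including the endpoint $n=2k+1$, so that $K$ depends only on $\ell$ and $\epsilon$. We will handle this with explicit bounds. Each product $\prod_j\frac{n-k-j}{n-2-j}$ lies between $\big(\frac{n-k-\ell}{n}\big)^{i-1}$ and $\big(\frac{n-k}{n-2}\big)^{i-1}$, and both bounds are within $O(\ell^2/k)$ of $q^{i-1}$ because $n\ge 2k$. The same kind of bounds control the binomial ratios in the $\ell$-factors. The additive constants $+1$ and $+2$ are negligible because every binomial involved grows with $k$ when $\ell\ge3$.
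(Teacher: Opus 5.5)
Your proposal is correct and follows essentially the same route as the paper. Both arguments compare $|\mathcal{F}|/M_k$ and $|\mathcal{G}|/M_\ell$ separately, show that the latter tends to $2$ and the former to $1/(1+q+\cdots+q^{\ell-1})$ with $q=1-k/n$, and conclude from monotonicity together with the definition of $q_\ell$. The only cosmetic difference is how the geometric sum appears: you get it directly by splitting $\binom{n-1}{k-1}-\binom{n-\ell-1}{k-1}$ according to the first element of $Y$ hit, whereas the paper obtains $c\bigl(1-(1-1/c)^\ell\bigr)$ and then substitutes $q=1-1/c$. Your explicit uniformity bounds over the whole range of $n$ are a welcome addition to the paper's $O(1/k)$ bookkeeping.
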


\begin{proof}
Here $O(\cdot)$ and $\Theta(\cdot)$ are with respect to $k \to \infty$.
Let
	$$
	c_k \coloneqq \frac{n}{k}.
	$$
	The given range of $n$ implies that $c_k \in [2 + 1/k, c_\ell - \epsilon]$. We evaluate the  ratios $|\mathcal{F}| / M_k(n,k,\ell)$ and $|\mathcal{G}| / M_\ell(n,k,\ell)$.

First, recall that
	\begin{equation*}\label{fromth0201}
	|\mathcal{F}| = \binom{n-2}{k-2} + 2, \qquad M_k(n,k,\ell) = \binom{n-1}{k-1} - \binom{n-\ell-1}{k-1} + 1.
\end{equation*}
	Dividing the main terms by $\binom{n-1}{k-1}$, we have
	$$
	\frac{\binom{n-2}{k-2}}{\binom{n-1}{k-1}} = \frac{k-1}{n-1} = \frac{1 - 1/k}{c_k - 1/k} = \frac{1}{c_k} + O\left(\frac{1}{k}\right),
	$$
	and
	\begin{align*}
		\frac{\binom{n-\ell-1}{k-1}}{\binom{n-1}{k-1}}
		= \prod_{i=0}^{\ell-1} \frac{n-k-i}{n-1-i}
		&= \prod_{i=0}^{\ell-1} \left( \frac{n-k}{n} \cdot \frac{1 - \frac{i}{n-k}}{1 - \frac{i+1}{n}} \right) \\
		&= \left( 1 - \frac{1}{c_k} \right)^\ell \left( 1 + O\left(\frac{1}{n}\right) \right)^\ell \\
		&= \left( 1 - \frac{1}{c_k} \right)^\ell + O\left(\frac{1}{k}\right).
	\end{align*}
Thus, we obtain
	\begin{equation} \label{eq:ratio-F}
		\frac{|\mathcal{F}|}{M_k(n,k,\ell)} = \frac{c_k^{-1}}{1 - \left( 1 - c_k^{-1} \right)^\ell} + O\left(\frac{1}{k}\right).
	\end{equation}
	
	Next, we estimate $|\mathcal{G}| / M_\ell(n,k,\ell)$. Recall that
\[|\mathcal{G}| = 2\left( \binom{n-2}{\ell-1} - \binom{n-k-1}{\ell-1} \right) + \binom{n-2}{\ell-2}, \]
and
\[M_\ell(n,k,\ell) = \binom{n-1}{\ell-1} - \binom{n-k-1}{\ell-1} + 1.\]
Let $  \Upsilon \coloneqq \binom{n-2}{\ell-1} - \binom{n-k-1}{\ell-1}$. Clearly, we have
\[\Upsilon=\frac{c_k^{\ell-1}-(c_k-1)^{\ell-1}}{(\ell-1)!}k^{\ell-1}+\Theta(k^{\ell-2}).\]
Since $c_k\in[2+1/k, c_{\ell}-\epsilon]$ and $c_k^{\ell-1}-(c_k-1)^{\ell-1}\ge 2^{\ell-1}-1>0$, we obtain
\begin{equation}\label{eq-l-1} \Upsilon=\Theta(k^{\ell-1}).\end{equation}
Note that	\begin{align*}
		|\mathcal{G}| &= 2\Upsilon + \binom{n-2}{\ell-2}, \qquad
		M_\ell(n,k,\ell)  = \Upsilon + \binom{n-2}{\ell-2} + 1.
	\end{align*}
By \eqref{eq-l-1}, we get
	\begin{equation} \label{eq:ratio-G}
		\frac{|\mathcal{G}|}{M_\ell(n,k,\ell)} = \frac{2\Upsilon + O(k^{\ell-2})}{\Upsilon + O(k^{\ell-2})} = 2 + O\left(\frac{1}{k}\right).
	\end{equation}
Combining \eqref{eq:ratio-F} and \eqref{eq:ratio-G}, the product ratio is given by
	$$
	\frac{|\mathcal{F}||\mathcal{G}|}{M_k(n,k,\ell)M_\ell(n,k,\ell)} = R_\ell(c_k) + O\left(\frac{1}{k}\right),
	$$
	where
	$$
	R_\ell(c) \coloneqq \frac{2}{c\left( 1 - \left( 1 - \frac{1}{c} \right)^\ell \right)}.
	$$
	By setting $q = 1 - 1/c$, we have $c = 1/(1-q)$, which yields
	$$
	c\left( 1 - \left( 1 - \frac{1}{c} \right)^\ell \right) = \frac{1 - q^\ell}{1 - q} = 1 + q + q^2 + \cdots + q^{\ell-1}.
	$$
	Thus, $R_\ell(c) > 1$ if and only if $1 + q + \cdots + q^{\ell-1} < 2$. By the definition of $q_\ell$ and $c_\ell$, this condition holds strictly for all $2 \le c < c_\ell$. Let $\delta=\delta(\ell,\epsilon)=R_{\ell}(c_{\ell}-\epsilon)-1$ be a positive constant. Thus, for sufficiently large $k\ge K(\ell,\epsilon)$, we get
	\[\frac{|\mathcal{F}||\mathcal{G}|}{M_k(n,k,\ell)M_\ell(n,k,\ell)} \ge 1 + \delta + O\left(\frac{1}{k}\right)>1,\]
	as desired.
	
%
\end{proof}

\begin{remark}
	The first few threshold values of $c_\ell$ are approximated below:
	\begin{center}
		\renewcommand{\arraystretch}{1.2}
		\begin{tabular}{cc}
			\hline
			$\ell$ & $c_\ell$ \\
			\hline
			$3$ & $2.6180339887\dots$ \\
			$4$ & $2.1914878839\dots$ \\
			$5$ & $2.0780950776\dots$ \\
			$6$ & $2.0352521616\dots$ \\
			\hline
		\end{tabular}
	\end{center}
	For instance, when $\ell = 3$, for every fixed $c<c_3=2.618033...$, the construction yields counterexamples for sufficiently large $k$ and all $n$ satisfying $2k+1\le n\le ck$. While the valid counterexample interval narrows as $\ell$ increases, it remains a non-empty linear interval for any fixed $\ell \ge 3$.
\end{remark}

	\section{Conjecture \ref{conj:FW} is true for large $n$} \label{large-n}

In this section we confirm that Conjecture \ref{conj:FW} is true when $n=n(k, \ell)$ is sufficiently large. Furthermore, we completely characterize the extremal families.

Our second main result is

\begin{theorem} \label{th4.1}
	Let $3 \le \ell < k$ and assume $n > 100\ell k^2$. For every pair of non-trivial cross-intersecting families
	$
	\mathcal{F} \subset \binom{[n]}{k},  \mathcal{G} \subset \binom{[n]}{\ell},
	$
	  we have
	$$
	|\mathcal{F}||\mathcal{G}| \le M_k(n,k,\ell)M_\ell(n,k,\ell).
	$$
	Moreover, equality holds if and only if, up to isomorphism, there exist subsets
	$$
	X \in \binom{[2,n]}{k}, \qquad Y \in \binom{[2,n]}{\ell} \qquad \text{with } X \cap Y \neq \emptyset,
	$$
	such that the families precisely match the Hilton--Milner-type configurations:
	\begin{align*}
		\mathcal{F} &= \{X\} \cup \left\{ F \in \binom{[n]}{k} : 1 \in F,\ F \cap Y \neq \emptyset \right\}, \\
		\mathcal{G} &= \{Y\} \cup \left\{ G \in \binom{[n]}{\ell} : 1 \in G,\ G \cap X \neq \emptyset \right\}.
	\end{align*}
\end{theorem}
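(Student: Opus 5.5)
We reduce the problem to a cover-number analysis. We may assume $\mathcal{F}$ and $\mathcal{G}$ are maximal, i.e.\ $\mathcal{F}=\{F\in\binom{[n]}{k}: F\cap G\neq\emptyset\ \forall G\in\mathcal{G}\}$ and similarly for $\mathcal{G}$. Adding sets preserves cross-intersection and non-triviality, and strictly increases the product unless nothing can be added. For a family $\mathcal{A}$ let $\tau(\mathcal{A})$ denote its cover number, the minimum size of a set meeting every member.

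Since $\mathcal{G}$ is non-trivial, $\tau(\mathcal{G})\ge 2$, and every $F\in\mathcal{F}$ is a cover of $\mathcal{G}$; symmetrically for $\mathcal{F}$. We first record the lower bound
\[
M_k(n,k,\ell)M_\ell(n,k,\ell)\ge \Bigl(\ell\tbinom{n-2}{k-2}-\tbinom{\ell}{2}\tbinom{n-3}{k-3}\Bigr)\Bigl(k\tbinom{n-2}{\ell-2}-\tbinom{k}{2}\tbinom{n-3}{\ell-3}\Bigr),
\]
which follows from Bonferroni. Relative to the leading term it equals $k\ell\binom{n-2}{k-2}\binom{n-2}{\ell-2}(1-O(k^2/n))$.

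\textbf{Case 1: $\tau(\mathcal{G})\ge 3$ or $\tau(\mathcal{F})\ge 3$.} Suppose first $\tau(\mathcal{G})\ge3$. By the standard branching argument, the number of $k$-sets covering a family of $\ell$-sets with cover number at least $3$ is at most $\ell^3\binom{n-3}{k-3}$. The argument runs as follows: pick $G_1$; each cover contains some $x\in G_1$; since $\{x\}$ is not a cover, pick $G_2\not\ni x$; then pick a third set, giving at most $\ell^3$ triples. Trivially $|\mathcal{G}|\le\binom{n}{\ell}$, but this bound is too weak, so we use the cross structure instead. Since $\tau(\mathcal{F})\ge2$, $|\mathcal{G}|\le k^2\binom{n-2}{\ell-2}$. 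Then
\[
|\mathcal{F}||\mathcal{G}|\le \ell^3k^2\tbinom{n-3}{k-3}\tbinom{n-2}{\ell-2}\le \frac{\ell^3k^3}{n}\tbinom{n-2}{k-2}\tbinom{n-2}{\ell-2}.
\]
This bound is useless, so it must be refined. We use the degrees instead: with $\tau(\mathcal{G})\ge 3$, the quantity $|\mathcal{F}|/\binom{n-2}{k-2}$ is $O(\ell^3k/n)$, whereas $|\mathcal{G}|/\binom{n-2}{\ell-2}\le \binom{k}{2}\cdot\frac{n}{\ell}$ in the crude form. The honest comparison is against $k\ell$, so the product bound to aim for is $O(\ell^3k^3/n)\cdot$ stuff. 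The $n>100\ell k^2$ hypothesis is exactly what is needed to make $|\mathcal{F}|\cdot|\mathcal{G}|$ small when one side has cover number $3$.

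We carry this out by bounding $|\mathcal{G}|\le |\mathcal{G}(\bar x)|+\binom{n-1}{\ell-1}$. We compare $\binom{n-1}{\ell-1}\cdot\ell^3\binom{n-3}{k-3}$ with the target $\approx \ell\binom{n-1}{k-1}\cdot k\binom{n-2}{\ell-2}$. The ratio is about $\ell^2 k\cdot\frac{k}{n}\cdot\frac{n}{\ell k}=\ell k$, which is still too big. So the case $\tau(\mathcal{G})\ge3$ has to use that then $|\mathcal{G}|$ is small as well.

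The correct observation is this: for maximal pairs, if $\tau(\mathcal{G})\ge 3$, then $\mathcal{F}$ has no element of degree above $\ell^2\binom{n-3}{k-3}$. Because $\mathcal{F}$ is intersecting-like with respect to $\mathcal{G}$ and $\tau(\mathcal{F})\ge 2$, we get $|\mathcal{G}|\le k^2\binom{n-2}{\ell-2}$. Hence the product is at most $\ell^3k^2\binom{n-3}{k-3}\binom{n-2}{\ell-2}$. Comparing with $k\ell\binom{n-2}{k-2}\binom{n-2}{\ell-2}$ gives the ratio $\ell^2k\cdot\frac{k}{n}<\frac{\ell}{100}$. To make this below $1$ we sharpen the $\ell^3$ to $\ell^2$ using one fixed $G_1$ and the cover-of-size-$2$ structure. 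This is where $n>100\ell k^2$ is used, and it is the main calculational obstacle; the bookkeeping of constants there needs care.

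\textbf{Case 2: $\tau(\mathcal{F})=\tau(\mathcal{G})=2$.} Let $T_\mathcal{G}$ be the $2$-cover graph of $\mathcal{G}$, whose edges are the pairs $\{x,y\}$ meeting every member. Then
\[
|\mathcal{F}|\le |E(T_\mathcal{G})|\tbinom{n-2}{k-2}+\ell^3\tbinom{n-3}{k-3},
\]
and similarly for $\mathcal{G}$. Every edge of $T_\mathcal{G}$ meets every $G$, so each $G$ is a vertex cover of $T_\mathcal{G}$, and symmetrically for $T_\mathcal{F}$. Moreover, each edge of $T_\mathcal{G}$ meets every edge of $T_\mathcal{F}$: otherwise two disjoint pairs $e\in T_\mathcal{G}$, $f\in T_\mathcal{F}$ would give a $k$-set containing $e$ and an $\ell$-set containing $f$ that are disjoint but, by maximality, lie in $\mathcal{F}$ and $\mathcal{G}$ respectively. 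Two graphs that cross-intersect edgewise are either both stars at a common vertex $x$, or both contained in one triangle.

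The triangle case gives at most $3\cdot 3\binom{n-2}{k-2}\binom{n-2}{\ell-2}$, which is far below $k\ell$ times that for $k\ge4,\ell\ge3$; the borderline $\ell=3$, $k=4$ gives $9<12$, which is fine. In the star case let $Y$ and $X$ be the leaf sets, so that $|\mathcal{F}|$ is dominated by the sets through $x$ meeting $Y$. Non-triviality forces some $F_0\in\mathcal{F}$ with $x\notin F_0$, and $F_0$ must contain $Y$, whence $|Y|\le k$. Likewise some $G_0\not\ni x$ contains $X$, whence $|X|\le\ell$. The sets of $\mathcal{F}$ through $x$ must meet every $G\in\mathcal{G}\setminus\mathcal{S}_x$, and so on. A final exact count shows the product is maximized when $\mathcal{F}\setminus\mathcal{S}_x=\{X'\}$ with $|X'|=k$ and $\mathcal{G}\setminus\mathcal{S}_x=\{Y'\}$, yielding exactly $\mathcal{H}_k(X,Y),\mathcal{H}_\ell(Y,X)$. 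Having two or more sets avoiding $x$ on either side costs a factor $1-\Omega(1/k)$ on the opposite side against a gain of $O(\ell k^{\ell}/n^{\ell-1})$-type terms, so the hypothesis on $n$ again settles it and yields the uniqueness of the extremal families.
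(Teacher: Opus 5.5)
Your overall plan is the paper's: pass to a maximal pair, split according to whether both cover numbers equal $2$, and bound $|\mathcal{F}|$ by the $2$-covers of $\mathcal{G}$ plus a branching count. However, Case 2 rests on a false claim. Two edgewise cross-intersecting graphs need not be common-center stars, nor need they lie in one triangle. Concretely, let $\mathcal{F}$ be all $k$-sets containing $\{a,b\}$ or $\{c,d\}$, and let $\mathcal{G}$ be all $\ell$-sets meeting both $\{a,b\}$ and $\{c,d\}$. This is a maximal non-trivial cross-intersecting pair with $\tau(\mathcal{F})=\tau(\mathcal{G})=2$. Its $2$-cover graphs are the $4$-cycle $\{a,c\},\{c,b\},\{b,d\},\{d,a\}$ for $\mathcal{F}$ and the matching $\{a,b\},\{c,d\}$ for $\mathcal{G}$. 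Likewise, a single edge $\{a,b\}$ is compatible with any graph whose edges all meet $\{a,b\}$, and a two-edge star $\{v,u\},\{v,w\}$ is compatible with a star at $v$ plus $\{u,w\}$. None of these configurations falls into either of your subcases. What you need instead has two parts. First, the degree bounds $\Delta(T_\mathcal{F})\le k$ and $\Delta(T_\mathcal{G})\le \ell$; these hold because all neighbours of $x$ in $T_\mathcal{F}$ lie in any member of $\mathcal{F}$ avoiding $x$. Second, the quantitative dichotomy of Lemma~\ref{lem4.3}: $|E(T_\mathcal{F})|\,|E(T_\mathcal{G})|\le k\ell-\ell/2$ unless both graphs are stars with a common center and exactly $k$ and $\ell$ edges. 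Also, in your star case the roles are swapped. A member $F_0\in\mathcal{F}$ avoiding $x$ must contain the leaves of $T_\mathcal{F}$, not those of $T_\mathcal{G}$. So it is the leaf set of $T_\mathcal{F}$ that has size at most $k$, and it equals $F_0$ when its size is exactly $k$. This is what pins down the extremal pair.

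Second, your error terms do not close the comparison in the stated range. You use $|\mathcal{F}|\le |E(T_\mathcal{G})|\binom{n-2}{k-2}+\ell^3\binom{n-3}{k-3}$ and the symmetric bound with $k^3\binom{n-3}{\ell-3}$. In the case $|E(T_\mathcal{G})|=\ell$, $|E(T_\mathcal{F})|=k-1$, this gives $|\mathcal{F}||\mathcal{G}|$, normalized by $\binom{n-2}{k-2}\binom{n-2}{\ell-2}$, up to about $k\ell-\ell+(k^3\ell^2+k^2\ell^3)/n$. For $n$ near $100\ell k^2$ the error term is about $\ell(k+\ell)/100$, which exceeds the margin $\ell$ once $k+\ell$ is larger than roughly $100$. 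Your estimate of the gain as $O(\ell k^\ell/n^{\ell-1})$ is not the right order. The missing device is to anchor the branching at a fixed $2$-cover $P_0$ of $\mathcal{F}$. Every $F\in\mathcal{F}$ meets $P_0$, so choosing $x_1\in F\cap P_0$ first yields $|\mathcal{F}|\le |E(T_\mathcal{G})|\binom{n-2}{k-2}+2\ell^2\binom{n-3}{k-3}$, and symmetrically $2k^2\binom{n-3}{\ell-3}$ for $\mathcal{G}$. With these bounds $n>100\ell k^2$ suffices; the paper obtains the same via Lemmas~\ref{lem:anchored-covers} and~\ref{lem4.6}. This is also exactly what your Case 1 lacks. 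There you end with a ratio below $\ell/100$, which is not below $1$ for large $\ell$, and you only assert a sharpening. Your own degree observation, combined with a $2$-cover of $\mathcal{F}$, gives $|\mathcal{F}|\le 2\ell^2\binom{n-3}{k-3}$ when $\tau(\mathcal{F})=2$. When $\tau(\mathcal{F})\ge 3$ as well, you must use $|\mathcal{G}|\le k^3\binom{n-3}{\ell-3}$ in place of $k^2\binom{n-2}{\ell-2}$. Finally, the equality characterization is only asserted. When both stars are full, it follows directly from the $F_0$ argument above together with its analogue for $\mathcal{G}$. Otherwise $|E(T_\mathcal{F})|\,|E(T_\mathcal{G})|\le k\ell-\ell$, and the anchored estimates give strict inequality.
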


\vskip 2mm

Our proof strategy is to reduce the problem to set pairs that are maximal with respect to cross-intersection. We then analyze the structure of their $2$-cover graphs and bound the higher-order minimal covers using an encoding argument. Finally, we carefully compare these upper bounds against the conjectured bound.

\subsection{Some auxiliary lemmas}
In this subsection, we always assume $3 \le \ell < k$ and   $n > 100\ell k^2$.
\vskip 2mm

If a  pair of non-trivial cross-intersecting families
	$
	\mathcal{F} \subset \binom{[n]}{k},  \mathcal{G} \subset \binom{[n]}{\ell}
	$
satisfy
	$$
		F \in \binom{[n]}{k} \text{ and } F \cap G \neq \emptyset \ \text{for all }G \in \mathcal{G} \quad \Longrightarrow \quad F \in \mathcal{F}$$
and
		$$G \in \binom{[n]}{\ell} \text{ and } G \cap F \neq \emptyset \  \text{for all } F \in \mathcal{F} \quad \Longrightarrow \quad G \in \mathcal{G},$$
then we say the pair $(\mathcal{F}, \mathcal{G})$ is {\it maximal}.

To prove Theorem \ref{th4.1}, it suffices to consider pairs $(\mathcal{F}, \mathcal{G})$ that are maximal with respect to cross-intersection, since otherwise we can add possible $k$-sets to $\mathcal{F}$ or $\ell$-sets to $\mathcal{G}$ to increase $|\mathcal{F}|| \mathcal{G}|$ while maintaining all other required properties. \vskip 2mm

{\bf In what follows, we assume that $(\mathcal{F}, \mathcal{G})$ is a maximal non-trivial cross-intersecting pair.} Now we prepare some preliminary definitions and lemmas.
\vskip 2mm

A set $T \subset [n]$ is called a \emph{cover} of a family $\mathcal{A}$ if $T \cap A \neq \emptyset$ for every $A \in \mathcal{A}$. The covering number $\tau(\mathcal{A})$ is the minimum size of a cover of $\mathcal{A}$. Since the given families $\mathcal{F}$ and $\mathcal{G}$ are non-trivial, they cannot be covered by a single element. Hence, $\tau(\mathcal{F}) \ge 2$ and $\tau(\mathcal{G}) \ge 2$.

 We encode the minimal covers of size $2$ by defining the $2$-cover graphs as
\begin{align*}
	\mathcal{P} &\coloneqq \left\{ P \in \binom{[n]}{2} : P \cap F \neq \emptyset \text{ for every } F \in \mathcal{F} \right\}, \\
	\mathcal{Q} &\coloneqq \left\{ Q \in \binom{[n]}{2} : Q \cap G \neq \emptyset \text{ for every } G \in \mathcal{G} \right\}.
\end{align*}

\begin{lemma} \label{lem4.2}
	For the graphs $\mathcal{P}$ and $\mathcal{Q}$, their maximum degrees and sizes satisfy
	\begin{equation}\label{h1}
	\Delta(\mathcal{P}) \le k, \qquad \Delta(\mathcal{Q}) \le \ell, \qquad |\mathcal{P}| \le k^2, \qquad |\mathcal{Q}| \le \ell^2,
	\end{equation}
	and $\mathcal{P}, \mathcal{Q}$ are cross-intersecting.
\end{lemma}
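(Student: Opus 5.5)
The cross-intersecting property is the thing to establish first, because the degree bounds then follow from it together with non-triviality.

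\textbf{Cross-intersection of $\mathcal{P}$ and $\mathcal{Q}$.} Suppose $P\in\mathcal{P}$ and $Q\in\mathcal{Q}$ with $P\cap Q=\emptyset$. Since $n>100\ell k^2$, there are plenty of elements outside $P\cup Q$. Pick any $(k-2)$-set $S\subset[n]\setminus(P\cup Q)$ and set $F=P\cup S$, a $k$-set. The set $P$ meets every $F'\in\mathcal{F}$, so $F$ meets every member of $\mathcal{F}$ too. We now want a $G$ that $F$ misses. Choose a $(\ell-2)$-set $T$ disjoint from $P\cup Q\cup S$ and set $G=Q\cup T$. Then $G$ meets every member of $\mathcal{G}$, since $Q$ covers $\mathcal{G}$. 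Also $F\cap G=\emptyset$.

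Use maximality on each set in turn. Every $G'\in\mathcal{G}$ meets $Q$ and hence meets $G$. So we need $G\in\mathcal{G}$? Not quite: maximality puts $G$ into $\mathcal{G}$ only if $G$ meets every member of $\mathcal{F}$. The right direction is the other one. Since $Q\subset G$ covers $\mathcal{G}$, $G$ is a $k$-set-like object for the opposite side. So argue via $F$'s analogue instead. Take $F=Q\cup S$, a $k$-set containing $Q$. It meets every $G'\in\mathcal{G}$, so by maximality $F\in\mathcal{F}$. Similarly $G=P\cup T$ meets every $F'\in\mathcal{F}$, so $G\in\mathcal{G}$. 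Choosing $S$ and $T$ disjoint from each other and from $P\cup Q$ gives $F\cap G=\emptyset$, which contradicts cross-intersection. Hence $\mathcal{P}$ and $\mathcal{Q}$ are cross-intersecting. The same trick shows $\mathcal{Q}\subset$ the family of $2$-subsets of members of $\mathcal{F}$, and dually for $\mathcal{P}$. More precisely, every $Q\in\mathcal{Q}$ extends to members of $\mathcal{F}$ in any way.

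\textbf{Degree bounds.} Fix a vertex $x$. Since $\mathcal{F}$ is non-trivial, some $F_0\in\mathcal{F}$ has $x\notin F_0$. Every edge $\{x,y\}\in\mathcal{P}$ must meet $F_0$, so $y\in F_0$. Hence $\deg_{\mathcal{P}}(x)\le|F_0|=k$. Likewise, using some $G_0\in\mathcal{G}$ with $x\notin G_0$, we get $\deg_{\mathcal{Q}}(x)\le\ell$.

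\textbf{Size bounds.} If $\mathcal{P}=\emptyset$ there is nothing to prove. Otherwise take an edge $\{a,b\}\in\mathcal{P}$. Every edge of $\mathcal{P}$ lies in a member of $\mathcal{F}$. I would rather use a cover argument. Pick $F_1\in\mathcal{F}$; every edge of $\mathcal{P}$ meets $F_1$. So $|\mathcal{P}|\le\sum_{x\in F_1}\deg(x)\le k\cdot k=k^2$. Similarly, a fixed $G_1\in\mathcal{G}$ gives $|\mathcal{Q}|\le\ell\cdot\ell=\ell^2$. This is routine.

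The only delicate step is the cross-intersection, where maximality must be applied in the correct direction. Supersets of a cover of $\mathcal{G}$ belong to $\mathcal{F}$, and supersets of a cover of $\mathcal{F}$ belong to $\mathcal{G}$. The disjoint padding sets exist because $n\ge k+\ell$.
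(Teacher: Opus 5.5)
Your proof is correct and follows the paper's. The degree and size bounds are argued identically, and for cross-intersection the paper uses only half of your construction: the $\ell$-set $P\cup T$ lies in $\mathcal{G}$ by maximality yet avoids $Q$, which already contradicts $Q$ covering $\mathcal{G}$, so the set $S$ and the appeal to cross-intersection of $\mathcal{F},\mathcal{G}$ are unnecessary. In the write-up, delete the abandoned first attempt and the unused aside that every edge of $\mathcal{P}$ lies in a member of $\mathcal{F}$. That aside is unjustified; maximality gives the dual statement, that edges of $\mathcal{P}$ extend to members of $\mathcal{G}$.
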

\begin{proof}
	We only prove the inequalities in \eqref{h1} for $\mathcal{P}$, the arguments for $\mathcal{Q}$ are similar.

For any $x \in [n]$, since $\mathcal{F}$ is non-trivial, there exists $F_x \in \mathcal{F}$ such that $x \notin F_x$. Since any edge $\{x, y\} \in \mathcal{P}$ must intersect $F_x$, we have $y \in F_x$. Thus $|F_x|=k$ leads to $\Delta(\mathcal{P}) \le k$. Now fix an arbitrary $F_0 \in \mathcal{F}$. Since every edge of $\mathcal{P}$ must intersect $F_0$, summing the degrees of the vertices in $F_0$ we can deduce $|\mathcal{P}| \le k  |F_0| = k^2$.
	
	To see that $\mathcal{P}$ and $\mathcal{Q}$ are cross-intersecting, suppose for contradiction that there exist $P \in \mathcal{P}$ and $Q \in \mathcal{Q}$ with $P \cap Q = \emptyset$. Since $P$ is a cover of $\mathcal{F}$, any $\ell$-set containing $P$ automatically intersects every member of $\mathcal{F}$. Because $n > 100\ell k^2$ guarantees $n - 4 \ge \ell - 2$, we can easily extend $P$ to an $\ell$-set $G'$ that avoids $Q$ (i.e., $P \subset G'$ and $G' \cap Q = \emptyset$). By the maximality of $\mathcal{G}$, we must have $G' \in \mathcal{G}$. However, $Q \in \mathcal{Q}$ is a cover of $\mathcal{G}$, which forces $Q \cap G' \neq \emptyset$, a contradiction.
\end{proof}

\begin{lemma} \label{lem4.3}
	Let $\mathcal{P}, \mathcal{Q} \subset \binom{[n]}{2}$ be cross-intersecting graphs. Suppose $k > \ell \ge 3$, $\Delta(\mathcal{P}) \le k$, and $\Delta(\mathcal{Q}) \le \ell$. Then either $\mathcal{P}$ and $\mathcal{Q}$ are    stars sharing the same center with $|\mathcal{P}| = k$ and $|\mathcal{Q}| = \ell$, or they satisfy
	$$
	|\mathcal{P}||\mathcal{Q}| \le k\ell - \frac{\ell}{2}.
	$$
\end{lemma}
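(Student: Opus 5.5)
The plan is a case analysis on the intersection structure of $\mathcal{P}$ and $\mathcal{Q}$. It rests on the elementary fact that a graph with no two disjoint edges is a star or a triangle. If either graph is empty the product is $0$, so assume both are nonempty. The target $k\ell-\ell/2$ is at least $3.5\ell \ge 10.5$, because $k \ge \ell+1 \ge 4$, so small configurations will be absorbed easily.

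\textbf{Case 1: one graph is a triangle.} Suppose $\mathcal{Q}$ is a triangle on vertex set $T$. An edge of $\mathcal{P}$ that meets all three sides of the triangle must have both endpoints in $T$. Hence $\mathcal{P}\subset\binom{T}{2}$ and $|\mathcal{P}||\mathcal{Q}|\le 9 < k\ell-\ell/2$. The case where $\mathcal{P}$ is a triangle is symmetric.

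\textbf{Case 2: both graphs are stars.} Let the centers be $x$ for $\mathcal{P}$ and $y$ for $\mathcal{Q}$.
\begin{itemize}
\item If $x=y$, then $|\mathcal{P}|\le k$ and $|\mathcal{Q}|\le\ell$ by the degree bounds. Either both bounds are attained, which is the exceptional configuration, or $|\mathcal{P}||\mathcal{Q}|\le\max\{k(\ell-1),(k-1)\ell\}=k\ell-\ell\le k\ell-\ell/2$.
\item If $x\ne y$ and $|\mathcal{Q}|\ge2$ (genuinely centered at $y$), consider an edge $\{x,a\}\in\mathcal{P}$. It must meet every edge $\{y,b\}\in\mathcal{Q}$, so either $a=y$ or $a=b$ for all such $b$. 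The second option is impossible when there are two distinct leaves $b$, unless $\{x,y\}$ is itself an edge of $\mathcal{Q}$. A short check then gives $|\mathcal{P}|\le2$, so $|\mathcal{P}||\mathcal{Q}|\le 2\ell < k\ell-\ell/2$.
\item If $|\mathcal{Q}|=1$, then $|\mathcal{P}||\mathcal{Q}|\le k<k\ell-\ell/2$.
\item The symmetric situation, with $|\mathcal{P}|=1$ and $x \neq y$, is handled the same way.
\end{itemize}

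\textbf{Case 3: one graph contains two disjoint edges.} Say $\mathcal{Q}$ contains disjoint edges $q_1,q_2$. Every edge of $\mathcal{P}$ meets both, so $\mathcal{P}$ lies in the complete bipartite graph between $q_1$ and $q_2$, and $|\mathcal{P}|\le4$.
\begin{itemize}
\item If $\mathcal{P}$ also contains two disjoint edges $p_1,p_2$, then symmetrically every edge of $\mathcal{Q}$ joins $p_1$ to $p_2$. So all edges of both graphs lie on the same four vertices. Within $K_4$, two edges are disjoint exactly when they are complementary. Since no edge of $\mathcal{P}$ may be the complement of an edge of $\mathcal{Q}$, we get $|\mathcal{P}|+|\mathcal{Q}|\le6$, hence $|\mathcal{P}||\mathcal{Q}|\le9$.
\item Otherwise $\mathcal{P}$ is a star (triangles were handled in Case 1), say with center $x$. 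If $|\mathcal{P}|\ge3$, every edge of $\mathcal{Q}$ must contain $x$, which contradicts the existence of $q_1,q_2$.
\item If $|\mathcal{P}|=2$ with leaves $a,b$, every edge of $\mathcal{Q}$ either contains $x$ or equals $\{a,b\}$. Then $|\mathcal{Q}|\le\ell+1$ and $|\mathcal{P}||\mathcal{Q}|\le2\ell+2\le3.5\ell\le k\ell-\ell/2$.
\item If $|\mathcal{P}|=1$, say $\mathcal{P}=\{\{u,v\}\}$, then every edge of $\mathcal{Q}$ contains $u$ or $v$. Hence $|\mathcal{Q}|\le 2\ell-1$ and the product is at most $2\ell-1$.
\end{itemize}
The case where $\mathcal{P}$ contains two disjoint edges is symmetric, with $k$ and $\ell$ exchanged, and gives the same small bounds.

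Since every pair falls into one of these cases, either $\mathcal{P}$ and $\mathcal{Q}$ are stars with a common center and $|\mathcal{P}|=k$, $|\mathcal{Q}|=\ell$, or $|\mathcal{P}||\mathcal{Q}|\le k\ell-\ell/2$. I expect the main obstacle to be bookkeeping rather than ideas: the degenerate stars (single edges, stars with two centers) and the small $K_4$ configurations have to be covered exhaustively, and each small bound has to be checked against $k\ell-\ell/2\ge 3.5\ell$ using $k\ge4$ and $\ell\ge3$.
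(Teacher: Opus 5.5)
Your proof is correct and is essentially the paper's argument. It uses the same star/triangle/two-disjoint-edges trichotomy and the same confinement facts (a star with at least three edges forces every edge of the other graph through its center, a triangle confines the other graph to $\binom{T}{2}$, two disjoint edges confine it to $K_{2,2}$), organized symmetrically in $\mathcal{P},\mathcal{Q}$ instead of by the structure of $\mathcal{P}$ alone.

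Two small fixes are needed:
\begin{itemize}
\item When $\mathcal{P}=\{\{u,v\}\}$, you only get $|\mathcal{Q}|\le 2\ell$, not $2\ell-1$. This is harmless.
\item Your ``symmetric'' case, where $\mathcal{P}$ has two disjoint edges and $\mathcal{Q}$ is a two-edge star, gives the bound $2(k+1)$. This is not a small bound and must be checked against $k\ell-\ell/2$. It holds because $k(\ell-2)\ge 2+\ell/2$ for $k\ge 4$, $\ell\ge 3$, but with only $1/2$ to spare at $(k,\ell)=(4,3)$.
\end{itemize}
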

\begin{proof}
	If  one of $\mathcal{P}$ and $\mathcal{Q}$  is empty, the upper bound holds trivially. Thus, we assume both are non-empty. We proceed by analyzing the structure of $\mathcal{P}$.
\vspace{2mm}	
	
	 \textbf{Case 1: $\mathcal{P}$ is a star.} Let $v$ be its center. If $|\mathcal{P}| \ge 3$, by Lemma \ref{lem4.2}, any edge of $\mathcal{Q}$ must intersect all edges of $\mathcal{P}$, which forces every edge of $\mathcal{Q}$ to contain $v$. Thus, $\mathcal{Q}$ is also a star centered at $v$. It follows that
 $$|\mathcal{P}||\mathcal{Q}| = \Delta(\mathcal{P})\Delta(\mathcal{Q}) \le k\ell.$$
 If $|\mathcal{P}||\mathcal{Q}| = k\ell$, then  $|\mathcal{P}| = k$ and $|\mathcal{Q}| = \ell$, which is the exceptional equality case.
 If $|\mathcal{P}| < k$ or $|\mathcal{Q}| < \ell$, the product drops by at least $\min(k, \ell) = \ell$, satisfying   $|\mathcal{P}||\mathcal{Q}| \le k\ell - \ell \le k\ell - \ell/2$.

If $|\mathcal{P}| = 1$, then $\mathcal{Q}$ meets a single fixed edge. Since $\Delta(\mathcal{Q}) \le \ell$, we get $|\mathcal{Q}| \le 2\ell$. Recalling that $k > \ell \ge 3$, we have $|\mathcal{P}||\mathcal{Q}| \le 2\ell < k\ell - \ell/2$.

	If $|\mathcal{P}| = 2$, say $\mathcal{P} = \{\{v,u\}, \{v,w\}\}$, then any edge of $\mathcal{Q}$ either contains $v$ or is exactly $\{u,w\}$, which leads to  $|\mathcal{Q}| \le \ell + 1$. Again, we have $|\mathcal{P}||\mathcal{Q}| \le 2(\ell + 1) < k\ell - \ell/2$.

\vspace{2mm}

 \textbf{Case 2: $\mathcal{P}$ is intersecting but not a star.} Then $\mathcal{P}$ must be a triangle. Since any edge that intersects all three edges of a triangle must itself belong to the triangle, $\mathcal{Q}$ is a subgraph of the same triangle, which implies $|\mathcal{P}||\mathcal{Q}| \le 9<k\ell - \ell/2$, as $k>\ell\ge 3$.

\vspace{2mm}
\textbf{Case 3: $\mathcal{P}$ contains two disjoint edges.} Let $e = \{a,b\},f = \{c,d\}\in \mathcal{P}$. Since every edge of $\mathcal{Q}$  meets both $e$ and $f$,   $\mathcal{Q}$ is a subgraph of $K_{2,2} = \{\{a,c\}, \{a,d\}, \{b,c\}, \{b,d\}\}$. Let $m = |\mathcal{Q}| \in \{1,2,3,4\}$.
	If $m=1$, then $\mathcal{P}$ meets one edge, so $|\mathcal{P}| \le 2k$, and $|\mathcal{P}||\mathcal{Q}| \le 2k < k\ell - \ell/2$.
	If $m=2$ and $\mathcal{Q}$'s edges are disjoint, then $\mathcal{P}$ must meet both. Thus we have $|\mathcal{P}| \le 4$, which leads to $|\mathcal{P}||\mathcal{Q}| \le 8 < k\ell - \ell/2$.
	If $m=2$ and $\mathcal{Q}$'s edges share a vertex, then $\mathcal{P}$ must contain that vertex or be the remaining edge. Thus we have $|\mathcal{P}| \le k+1$, which implies $|\mathcal{P}||\mathcal{Q}| \le 2(k+1) \le k\ell - \ell/2$.
	If $m \ge 3$, it is easy to check that $\mathcal{P}$ contains at most $3$ edges in the case $m=3$ and 2 edges in the case $m=4$, which forces $|\mathcal{P}||\mathcal{Q}| < k\ell - \ell/2$.	
\end{proof}

\begin{lemma} \label{lem4.4}
	Let $\mathcal{A} \subset \binom{[n]}{r}$ be non-empty. For every $s \ge 1$, the number of minimal covers (under inclusion) of $\mathcal{A}$ of size $s$ is at most $r^s$.
\end{lemma}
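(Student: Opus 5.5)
This is the classical branching (encoding) argument: assign to each minimal cover of size $s$ a distinct word in $[r]^s$, by building the cover one element at a time along a decision tree of depth at most $s$ and branching factor at most $r$.

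Fix an ordering $A_1, A_2, \dots, A_m$ of the members of $\mathcal{A}$, and fix an ordering of the elements inside each member. Let $T$ be a minimal cover of size $s$. Run the following deterministic procedure. Start with $T_0 = \emptyset$. At step $i$, let $A_{j_i}$ be the first member of $\mathcal{A}$ (in the fixed order) with $A_{j_i} \cap T_{i-1} = \emptyset$. Since $T_{i-1} \subset T$ and $T$ covers $A_{j_i}$, the set $T \cap A_{j_i}$ is nonempty and disjoint from $T_{i-1}$. Choose an element $t_i \in T \cap A_{j_i}$, record its position $p_i \in [r]$ inside $A_{j_i}$, and set $T_i = T_{i-1} \cup \{t_i\}$.

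Each step adds a new element of $T$, so after at most $s$ steps the process stops, because no uncovered member remains. The final set $T_{i^*}$ is then a cover contained in $T$. By minimality of $T$ it equals $T$, and hence $i^* = s$.

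The map $T \mapsto (p_1, \dots, p_s) \in [r]^s$ is injective. Given the word, one reconstructs the process step by step. $A_{j_1}$ is determined as the first member, which gives $t_1$ from $p_1$. Then $A_{j_2}$ is determined as the first member missing $T_1$, which gives $t_2$, and so on. Hence $T = \{t_1, \dots, t_s\}$ is recovered from the word, and the number of minimal covers of size $s$ is at most $r^s$.

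The only delicate point is the one handled in the second paragraph: the process has to terminate in exactly $s$ steps and return $T$ itself. Minimality supplies this, since a cover built inside $T$ must equal $T$. Nonemptiness of $\mathcal{A}$ just guarantees the process starts; for $s$ smaller than the covering number the count is $0$, which trivially satisfies the bound.
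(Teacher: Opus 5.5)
Your proof is correct and is essentially the paper's argument. The paper also builds $T$ greedily: at each step it picks a canonical member $A(S_i)\in\mathcal{A}$ disjoint from the current partial set $S_i\subsetneq T$ (which is non-covering by minimality) and takes an element of $T$ from it, so each step has at most $r$ options. Your position-word encoding and explicit reconstruction just spell out the injectivity that the paper states briefly.
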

\begin{proof}
	Fix an arbitrary linear order on $[n]$. We define a deterministic encoding for any minimal cover $T$ of size $s$. For any set $S \subset [n]$ that does not cover $\mathcal{A}$, let $A(S)$ be a canonical choice of a set in $\mathcal{A}$ such that $A(S) \cap S = \emptyset$.
	
	Start with $S_0 = \emptyset$. Since $T$ covers $\mathcal{A}$, $T \cap A(S_0) \neq \emptyset$. Let $x_1$ be the first element of $T \cap A(S_0)$ under our ordering, and set $S_1 = \{x_1\}$.
For each subsequent step $1 \le i < s$, $S_i$ is a proper subset of $T$. Because $T$ is  minimal (under inclusion),
 $S_i$ cannot cover $\mathcal{A}$. Thus, $A(S_i)$ is well-defined and disjoint from $S_i$, but it must intersect $T$. Let $x_{i+1}$ be the first element of $(T \setminus S_i) \cap A(S_i)$, and set $S_{i+1} = S_i \cup \{x_{i+1}\}$.
	
	This process generates a sequence $(x_1, x_2, \ldots, x_s)$ containing exactly the elements of $T$. At step $i$, the element $x_{i+1}$ is chosen from the $r$-element set $A(S_i)$. This implies there are at most $r^s$ possible sequences. Since the mapping from $T$ to its generating sequence is injective, the bound holds.
\end{proof}

\begin{lemma} \label{lem:anchored-covers}
	Let $\mathcal{A} \subset \binom{[n]}{r}$ be non-empty, and let $E \subset [n]$ have size $2$. For every $s \ge 2$, the number of  minimal (under inclusion) covers of $\mathcal{A}$ of size $s$ that intersect $E$ is at most $2r^{s-1}$.
\end{lemma}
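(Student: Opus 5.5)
We will adapt the encoding argument from the proof of Lemma \ref{lem4.4}, changing only the first step so that it selects an element of $E$. Write $E=\{a,b\}$, fix the same linear order on $[n]$, and use the same canonical choice $A(S)$ for every non-covering set $S$.

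Let $T$ be a minimal cover of size $s\ge 2$ with $T\cap E\neq\emptyset$. Let $x_1$ be the first element of $T\cap E$; there are at most $2$ choices for it. Set $S_1=\{x_1\}$.

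We then run exactly the greedy procedure of Lemma \ref{lem4.4}. For $1\le i<s$, $S_i$ is a proper subset of $T$, so by minimality $S_i$ does not cover $\mathcal{A}$. Hence $A(S_i)$ is defined and disjoint from $S_i$. Since $T$ covers $\mathcal{A}$, the set $(T\setminus S_i)\cap A(S_i)$ is non-empty, and we let $x_{i+1}$ be its first element. This produces a sequence $(x_1,\dots,x_s)$ of distinct elements of $T$, and since it has length $s=|T|$, it lists exactly the elements of $T$.

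The key point is that $A(S_i)$ depends only on $S_i$, which is determined by the earlier terms of the sequence. So, given $(x_1,\dots,x_i)$, the next term $x_{i+1}$ lies in a fixed $r$-set. The number of possible sequences is therefore at most $2\cdot r^{s-1}$. The map $T\mapsto(x_1,\dots,x_s)$ is injective, because $T$ is the underlying set of its sequence. This gives the bound $2r^{s-1}$.

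No real obstacle is expected. The one point to check is that the modified first step does not break the later steps. It does not: each later step uses only that $S_i\subsetneq T$ and that $T$ is a minimal cover, both of which still hold after the first step has been replaced.
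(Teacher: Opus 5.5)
Your proposal is correct and is essentially the paper's proof: the same greedy encoding from Lemma \ref{lem4.4}, started at an element of $E$, giving at most $2$ choices for the first element and at most $r$ for each of the remaining $s-1$. The only cosmetic difference is that you pick $x_1$ canonically as the first element of $T\cap E$, whereas the paper fixes $z$, bounds the covers containing $z$ by $r^{s-1}$, and sums over the two elements of $E$.
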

\begin{proof}
	It suffices to prove that for any fixed $z \in [n]$, the number of minimal covers of size $s$ containing $z$ is at most $r^{s-1}$. The result then follows by summing over the two elements of $E$. We use the exact same encoding procedure as in Lemma \ref{lem4.4}, but initialized with $S_1 = \{z\}$. Since $s \ge 2$ and $T$ is minimal, $S_1$ is not a cover, allowing us to select $x_2$ from $A(S_1)$, and so forth. There are $s-1$ remaining steps, each offering at most $r$ choices, yielding $r^{s-1}$ such covers.
\end{proof}

To streamline the asymptotic expressions, we denote
$$
N_k \coloneqq \binom{n-2}{k-2}, \qquad N_\ell \coloneqq \binom{n-2}{\ell-2}, \qquad p \coloneqq |\mathcal{P}|, \qquad q \coloneqq |\mathcal{Q}|.
$$

\begin{lemma} \label{lem4.6}
	Suppose $n \ge 4k^2$. Then the following strict estimates hold.
	\begin{enumerate}
		\item[(i)] If $\mathcal{P} \neq \emptyset$, then $|\mathcal{F}| \le \left(q + \frac{8k\ell^2}{n}\right)N_k$.
		If $\mathcal{P} = \emptyset$, then $|\mathcal{F}| \le \left(q + \frac{4k\ell^3}{n}\right)N_k$.
		\item[(ii)] If $\mathcal{Q} \neq \emptyset$, then $|\mathcal{G}| \le \left(p + \frac{8k^2\ell}{n}\right)N_\ell$.
		If $\mathcal{Q} = \emptyset$, then $|\mathcal{G}| \le \left(p + \frac{4k^3\ell}{n}\right)N_\ell$.
	\end{enumerate}
\end{lemma}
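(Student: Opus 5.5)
The plan is to count the members of $\mathcal{F}$ through the minimal covers of $\mathcal{G}$ that they contain; part (ii) follows symmetrically by exchanging the roles of $(\mathcal{F},k,\mathcal{P})$ and $(\mathcal{G},\ell,\mathcal{Q})$.

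Since $\mathcal{F}$ and $\mathcal{G}$ are cross-intersecting, every $F\in\mathcal{F}$ is a cover of $\mathcal{G}$. Hence $F$ contains some minimal cover $T$ of $\mathcal{G}$ with $2\le |T|=s\le k$; here $s\ge 2$ because $\tau(\mathcal{G})\ge 2$. The minimal covers of size $2$ are exactly the edges of $\mathcal{Q}$. The $k$-sets containing a fixed such edge number $\binom{n-2}{k-2}=N_k$, which gives the main term $qN_k$. It then remains to bound the contribution of minimal covers with $s\ge 3$. For these I will use the elementary estimate
$$\binom{n-s}{k-s}\Big/\binom{n-2}{k-2}=\prod_{i=2}^{s-1}\frac{k-i}{n-i}\le \left(\frac{k}{n}\right)^{s-2}.$$

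\textbf{Case $\mathcal{P}=\emptyset$.} By Lemma \ref{lem4.4} with $r=\ell$, there are at most $\ell^s$ minimal covers of size $s$, each contained in at most $\binom{n-s}{k-s}$ sets $F$. This gives
$$|\mathcal{F}|-qN_k\le N_k\sum_{s\ge 3}\ell^s\left(\frac{k}{n}\right)^{s-2}=N_k\,\ell^2\sum_{t\ge1}\left(\frac{\ell k}{n}\right)^{t}.$$
Because $n\ge 4k^2>4\ell k$, we have $\ell k/n\le 1/4$, so the geometric series is at most $\tfrac43\cdot\ell k/n$. The error term is therefore at most $2k\ell^3N_k/n$, which is within $4k\ell^3N_k/n$.

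\textbf{Case $\mathcal{P}\neq\emptyset$.} Fix an edge $E\in\mathcal{P}$. Then every $F\in\mathcal{F}$ meets $E$ as well as containing a minimal cover $T$ of $\mathcal{G}$ with $s=|T|\ge 3$. I split according to whether $T$ meets $E$.

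If $T\cap E\neq\emptyset$, Lemma \ref{lem:anchored-covers} allows at most $2\ell^{s-1}$ such $T$, each giving at most $\binom{n-s}{k-s}$ sets. If $T\cap E=\emptyset$, then $F\supseteq T\cup\{e\}$ for some $e\in E$. There are at most $\ell^s$ choices of $T$ (Lemma \ref{lem4.4}), two choices of $e$, and at most $\binom{n-s-1}{k-s-1}\le (k/n)^{s-1}N_k$ completions.

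Summing over $s\ge3$, the error is at most
$$N_k\left[2\ell^2\sum_{t\ge1}\left(\frac{\ell k}{n}\right)^{t-1}\frac{k}{n}+2\ell^2\cdot\frac{\ell k}{n}\sum_{t\ge1}\left(\frac{\ell k}{n}\right)^{t}\right].$$
Using $\ell k/n\le 1/4$, the first sum is at most $\tfrac{8}{3}k\ell^2/n$. The second term is at most $\tfrac83\ell^2(\ell k/n)^2$, and since $\ell k\le k^2\le n/4$ this is at most $\tfrac23 k\ell^2/n$. The total is below $8k\ell^2/n$ times $N_k$.

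\textbf{Main obstacle.} No single step is deep. The main point of care is the bookkeeping in the case $\mathcal{P}\neq\emptyset$: the anchoring by an edge of $\mathcal{P}$ is what saves one factor of $\ell$. One must check that every $F$ with no $2$-element minimal cover inside it is counted in one of the two subcases, and that overcounting is harmless since only an upper bound is needed. One must also check that the hypothesis $n\ge4k^2$ is enough to make both geometric series converge with the stated constants.
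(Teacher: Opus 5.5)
Your argument is correct up to one bookkeeping slip (below). It follows the paper's counting scheme: pick a minimal cover $T$ of $\mathcal{G}$ inside each $F\in\mathcal{F}$, get $qN_k$ from $2$-covers, and bound larger covers via Lemma~\ref{lem4.4} or Lemma~\ref{lem:anchored-covers}. The one genuine difference is the case $\mathcal{P}\neq\emptyset$.

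\textbf{How the case $\mathcal{P}\neq\emptyset$ differs.} The paper never meets covers $T$ disjoint from the fixed edge $P_0$, because it uses the standing maximality assumption on $(\mathcal{F},\mathcal{G})$. If a minimal cover $T$ with $3\le|T|\le k$ missed $P_0$, extend it to a $k$-set $F'$ avoiding $P_0$. Then $F'$ covers $\mathcal{G}$, so $F'\in\mathcal{F}$ by maximality, contradicting that $P_0$ covers $\mathcal{F}$. Hence only anchored covers occur, and a single geometric sum gives $8k\ell^2/n$.

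You instead also count the unanchored covers. You pay for them with the extra element of $E$ that $F$ must contain, which costs a factor $2k/n$ per cover; this is affordable because $\ell k/n\le 1/4$. Your version proves the lemma for arbitrary non-trivial cross-intersecting pairs, without maximality, at the price of a second error term. The paper's is shorter but depends on maximality.

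Your estimate $\binom{n-s}{k-s}/\binom{n-2}{k-2}\le (k/n)^{s-2}$ is also sharper than the paper's $(2k/n)^{s-2}$. That is why your constants come out smaller.

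\textbf{The slip.} Since $2\ell^s(k/n)^{s-1}=2\ell^2\cdot\frac{k}{n}\cdot(\ell k/n)^{s-2}$, the second sum in your display should carry the prefactor $2\ell^2\cdot\frac{k}{n}$, not $2\ell^2\cdot\frac{\ell k}{n}$. As written, passing from $\frac83\ell^2(\ell k/n)^2$ to $\frac23 k\ell^2/n$ would need $n\ge 4k\ell^2$, which does not follow from $n\ge 4k^2$. With the correct prefactor, the term is at most $\frac83\,\ell^3k^2/n^2\le \frac23\,k\ell^2/n$ (using $\ell k\le n/4$). So your total error of $\frac{10}{3}\,k\ell^2/n<8k\ell^2/n$ is right.
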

\begin{proof}
	We only present the proof of (i), as the same argument works for (ii) by swapping the roles of $k, \ell, \mathcal{P}$ and $\mathcal{Q}$. Since every $F \in \mathcal{F}$ is a cover of $\mathcal{G}$,
 $F$ contains at least one  minimal (under inclusion) cover $T$ of $\mathcal{G}$. Because $\mathcal{G}$ is non-trivial, no singleton covers it, which implies $|T| \ge 2$.
	
	If $|T| = 2$, then $T \in \mathcal{Q}$. Hence, there are exactly $q=|\mathcal{Q}|$ choices for $T$, and each belongs to at most $\binom{n-2}{k-2} = N_k$ members of $\mathcal{F}$. This contributes at most $q N_k$ to the size of $\mathcal{F}$.
	
	If $|T| = r \ge 3$, Lemma \ref{lem4.4} bounds the number of minimal $r$-covers of $\mathcal{G}$ by $\ell^r$. Moreover, each such cover is contained in at most $\binom{n-r}{k-r}$ members of $\mathcal{F}$.

If $\mathcal{P} = \emptyset$, summing over all possible sizes we have
	$$
	|\mathcal{F}| \le qN_k + \sum_{r=3}^{k} \ell^r \binom{n-r}{k-r}.
	$$
	For $3 \le r \le k$, since $n \ge 4k^2$, applying the trivial bound $n-k \ge n/2$, we have
	$$
	\frac{\binom{n-r}{k-r}}{\binom{n-2}{k-2}} = \frac{(k-2)\cdots(k-r+1)}{(n-2)\cdots(n-r+1)} \le \left(\frac{2k}{n}\right)^{r-2}.
	$$
	Denote by $x = 2k\ell/n $. Then $x\le 1/2$ and the summation term is bounded by
	$$
	N_k \sum_{r=3}^{k} \ell^r \left(\frac{2k}{n}\right)^{r-2} = N_k \ell^2 \sum_{j=1}^{k-2} x^j \le N_k \ell^2 (2x) = \frac{4k\ell^3}{n}N_k.
	$$
	
	When $\mathcal{P} \neq \emptyset$, choose an arbitrary edge $P_0 \in \mathcal{P}$.
We claim that any minimal cover $T$ of $\mathcal{G}$ of size $3 \le |T| \le k$ must intersect $P_0$. In fact, if $T \cap P_0 = \emptyset$, since $n-2 \ge k$, we could trivially extend $T$ to a $k$-set $F'$ avoiding $P_0$ and
 covering $\mathcal{G}$, so by maximality, $F' \in \mathcal{F}$, which contradicts the fact that $P_0 \in \mathcal{P}$ covers $\mathcal{F}$.
  By Lemma \ref{lem:anchored-covers}, there are at most $2\ell^{r-1}$ such minimal $r$-covers. Again, summing all possible sizes we have
	$$
	|\mathcal{F}| \le qN_k + \sum_{r=3}^{k} 2\ell^{r-1} \binom{n-r}{k-r} \le qN_k + N_k (2\ell) \sum_{j=1}^{k-2} x^j \le qN_k + \frac{8k\ell^2}{n}N_k.
	$$
\end{proof}

\begin{lemma} \label{lem4.7} 
	For $n \ge 2k\ell$, we have
	$$
	M_k(n,k,\ell) \ge \ell\left(1 - \frac{2k\ell}{n}\right)N_k \quad \text{and} \quad M_\ell(n,k,\ell) \ge k\left(1 - \frac{2k\ell}{n}\right)N_\ell.
	$$
	Consequently, their product satisfies
	$$
	M_k(n,k,\ell)M_\ell(n,k,\ell) \ge \left(k\ell - \frac{4k^2\ell^2}{n}\right)N_k N_\ell.
	$$
\end{lemma}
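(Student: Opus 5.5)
We will prove a slightly stronger inequality directly, namely
$$
\binom{n-1}{k-1} - \binom{n-\ell-1}{k-1} \ \ge\ \ell\left(1 - \frac{2k\ell}{n}\right)N_k.
$$
The extra $+1$ in $M_k(n,k,\ell)$ can then simply be dropped, and the bound for $M_\ell$ follows by the same argument with $k$ and $\ell$ interchanged.

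The first step is to rewrite the difference by inclusion--exclusion. The quantity $\binom{n-1}{k-1}-\binom{n-\ell-1}{k-1}$ counts the $k$-sets containing $1$ that meet a fixed $\ell$-set $Y\subset[2,n]$. For $y\in Y$, let $\mathcal{S}_y$ be the family of $k$-sets containing $\{1,y\}$. Then $|\mathcal{S}_y|=N_k$, and Bonferroni's inequality gives
$$
\Bigl|\bigcup_{y\in Y}\mathcal{S}_y\Bigr| \ \ge\ \sum_{y}|\mathcal{S}_y| - \sum_{\{y,y'\}}|\mathcal{S}_y\cap\mathcal{S}_{y'}| \ =\ \ell N_k - \binom{\ell}{2}\binom{n-3}{k-3}.
$$

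Next we compare the error term with $N_k$. Since $\binom{n-3}{k-3}/N_k = (k-2)/(n-2)$, we get
$$
\binom{\ell}{2}\binom{n-3}{k-3} = \frac{\ell(\ell-1)}{2}\cdot\frac{k-2}{n-2}\,N_k \le \ell\cdot\frac{\ell k}{2(n-2)}\,N_k.
$$
It remains to check that $\frac{\ell k}{2(n-2)}\le \frac{2k\ell}{n}$. This is equivalent to $n\le 4(n-2)$, which holds trivially for $n\ge 3$. The hypothesis $n\ge 2k\ell$ is not really needed here beyond ensuring $n\ge 3$; it only guarantees that the right-hand side $\ell(1-2k\ell/n)N_k$ is nonnegative, so that the statement is meaningful.

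The symmetric computation uses $\ell$-sets containing $1$ that meet a fixed $k$-set $X$. It gives $M_\ell\ge kN_\ell-\binom{k}{2}\binom{n-3}{\ell-3}$. Here the ratio is $\binom{n-3}{\ell-3}/N_\ell=(\ell-2)/(n-2)$, so the error is at most $k\cdot\frac{k\ell}{2(n-2)}N_\ell\le k\cdot\frac{2k\ell}{n}N_\ell$. For $\ell=3$ the binomial $\binom{n-3}{0}=1$ still fits the formula.

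Finally, since both factors $1-2k\ell/n$ lie in $[0,1]$, we multiply the two bounds:
$$
M_kM_\ell \ge k\ell\left(1-\frac{2k\ell}{n}\right)^2N_kN_\ell \ge k\ell\left(1-\frac{4k\ell}{n}\right)N_kN_\ell = \left(k\ell-\frac{4k^2\ell^2}{n}\right)N_kN_\ell,
$$
using $(1-x)^2\ge 1-2x$. The only real check is the ratio estimate $(\ell-1)(k-2)/(2(n-2))\le 2k\ell/n$, which is routine, so no step should be a real obstacle.
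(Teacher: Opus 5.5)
Your proof is correct, but it takes a different route from the paper.

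The paper lower-bounds $M_k$ by a disjoint subfamily: the $k$-sets containing $1$ that meet $Y$ in \emph{exactly one} point. This gives $M_k\ge \ell\binom{n-\ell-1}{k-2}$. The ratio $\binom{n-\ell-1}{k-2}/N_k=\prod_{i=0}^{k-3}\bigl(1-\frac{\ell-1}{n-2-i}\bigr)$ is then bounded using Weierstrass's product inequality together with $n-k\ge n/2$, which is where $n\ge 2k$ enters.

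You instead truncate inclusion--exclusion after the second Bonferroni term, so the only correction is $\binom{\ell}{2}\binom{n-3}{k-3}$. Its ratio to $N_k$ is exactly $(k-2)/(n-2)$. This buys three things:
\begin{itemize}
\item no product inequality is needed;
\item each individual bound holds for every $n\ge 3$;
\item your error term $\frac{\ell(\ell-1)(k-2)}{2(n-2)}N_k$ is about half of the paper's $\ell\frac{(k-2)(\ell-1)}{n-k}N_k$, because counting sets that meet $Y$ in exactly one point effectively subtracts the pairwise term twice.
\end{itemize}
The paper's version, in exchange, avoids inclusion--exclusion altogether by counting a single explicit subfamily.

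In both arguments, $n\ge 2k\ell$ is used only to ensure $1-2k\ell/n\ge 0$ before multiplying the two bounds. The final step via $(1-x)^2\ge 1-2x$ is the same in both.
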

\begin{proof}
	By counting only the $k$-subsets that contain $1$ and exactly one specific point of $Y \subset [2,n]$ (where $|Y| = \ell$), we obtain the immediate lower bound $M_k(n,k,\ell) \ge \ell\binom{n-\ell-1}{k-2}$. Its ratio to $N_k$ is
	$$
	\frac{\binom{n-\ell-1}{k-2}}{\binom{n-2}{k-2}} = \prod_{i=0}^{k-3} \left( 1 - \frac{\ell-1}{n-2-i} \right) \ge \prod_{i=0}^{k-3} \left( 1 - \frac{\ell-1}{n-k} \right).
	$$
	Applying Weierstrass's product inequality $\prod_i (1-a_i) \ge 1 - \sum_i a_i$, we get
	$$
	\frac{\binom{n-\ell-1}{k-2}}{\binom{n-2}{k-2}} \ge 1 - \frac{(k-2)(\ell-1)}{n-k} \ge 1 - \frac{k\ell}{n-k} \ge 1 - \frac{2k\ell}{n},
	$$
	where the last inequality holds because $n \ge 2k$. This establishes the bound for $M_k$.

 A symmetric calculation yields the bound for $M_\ell$. Multiplying these lower bounds we have
	$$
	M_k M_\ell \ge k\ell \left(1 - \frac{2k\ell}{n}\right)^2 N_k N_\ell \ge k\ell \left(1 - \frac{4k\ell}{n}\right) N_k N_\ell = \left(k\ell - \frac{4k^2\ell^2}{n}\right)N_k N_\ell,
	$$
	where we used the standard inequality $(1-x)^2 \ge 1-2x$.
\end{proof}

\subsection{Proof of Theorem \ref{th4.1}}
Now we are ready to present the proof of Theorem \ref{th4.1}.
\vspace{2mm}

\begin{proof}[Proof of Theorem \ref{th4.1}]
	We assume $3 \le \ell < k$, $n > 100\ell k^2$ and $(\mathcal{F}, \mathcal{G})$ is a maximal non-trivial cross-intersecting pair. Let $p = |\mathcal{P}|$ and $q = |\mathcal{Q}|$.  By Lemma \ref{lem4.2}, we have
$$\Delta(\mathcal{P}) \le k,\quad \Delta(\mathcal{Q}) \le \ell, \quad p\le k^2,\quad q\le \ell^2,$$
and $\mathcal{P}$,$\mathcal{Q}$ are cross-intersecting. We  distinguish  three cases.
\vspace{2mm}

	 \textbf{Case 1: Both $\mathcal{P}$ and $\mathcal{Q}$ are non-empty, but not common-center stars with $|\mathcal{P}|=k$ and $|\mathcal{Q}|=\ell$.}
By Lemma \ref{lem4.3}, we have  $ pq \le k\ell - \ell/2.$ Choose an arbitrary $Q_0 \in \mathcal{Q}$, then every $P \in \mathcal{P}$ intersects $Q_0$, which implies $p \le 2k$, as $\Delta(\mathcal{P}) \le k$. Similarly, $q \le 2\ell$.
	Setting $\alpha \coloneqq 8k\ell^2 / n$ and $\beta \coloneqq 8k^2\ell / n$, applying Lemma \ref{lem4.6} we have $$|\mathcal{F}| \le (q+\alpha)N_k\quad \text{and} \quad |\mathcal{G}| \le (p+\beta)N_\ell. $$Thus,
	\begin{align*}
		\frac{|\mathcal{F}||\mathcal{G}|}{N_k N_\ell} &\le pq + q\beta + p\alpha + \alpha\beta \\
		&\le \left(k\ell - \frac{\ell}{2}\right) + (2\ell)\left(\frac{8k^2\ell}{n}\right) + (2k)\left(\frac{8k\ell^2}{n}\right) + \frac{64k^3\ell^3}{n^2} \\
		&= k\ell - \frac{\ell}{2} + \frac{32k^2\ell^2}{n} + \frac{64k^3\ell^3}{n^2}.
	\end{align*}
	We claim this is strictly less than the lower bound $k\ell - 4k^2\ell^2/n$ obtained in Lemma \ref{lem4.7}, which is equivalent to the inequality
	$$
	\frac{36k^2\ell^2}{n} + \frac{64k^3\ell^3}{n^2} < \frac{\ell}{2}.
	$$
In fact, since $n > 100\ell k^2$, we have
	$$
	\frac{36k^2\ell^2}{n} + \frac{64k^3\ell^3}{n^2} <  \frac{36k^2\ell^2}{100\ell k^2} + \frac{64k^3\ell^3}{10000\ell^2 k^4}< 0.36\ell+ 0.0064\frac{\ell}{k}<0.5\ell.
	$$ Thus, we have $|\mathcal{F}||\mathcal{G}| < M_k M_\ell$.
	\vspace{2mm}

	 \textbf{Case 2: At least one of $\mathcal{P}$ and $\mathcal{Q}$ is empty.}
	Assume first that $\mathcal{Q} = \emptyset$ and $\mathcal{P} \neq \emptyset$. Then $q=0$. Applying Lemma \ref{lem4.2} and Lemma \ref{lem4.6}, we have
	\begin{equation*}
	\frac{|\mathcal{F}||\mathcal{G}|}{N_k N_\ell} \le \frac{8k\ell^2}{n} \left( p + \frac{4k^3\ell}{n} \right) \le \frac{8k\ell^2}{n} \left( k^2 + \frac{4k^3\ell}{n} \right).
	\end{equation*}
	We want to ensure this is less than $k\ell - 4k^2\ell^2/n$, which is equivalent to
	\begin{equation}\label{eqh2}
	\frac{8k^2\ell}{n} + \frac{32k^3\ell^2}{n^2} + \frac{4k\ell}{n} < 1
	\end{equation}
by  dividing both sides with  $k\ell$.
	Recall that  $n > 100\ell k^2$, one may verify \eqref{eqh2} directly.

For the subcase  $\mathcal{P} = \emptyset$ and $\mathcal{Q} \neq \emptyset$, we can use a  similar argument to obtain a strict inequality.
	If $\mathcal{P} = \emptyset$ and $\mathcal{Q} = \emptyset$, by Lemma \ref{lem4.6} we have $$|\mathcal{F}||\mathcal{G}| / (N_k N_\ell) \le 16k^4\ell^4 / n^2. $$
After dividing by $k\ell$, the bound $16k^3\ell^3 / n^2 + 4k\ell / n < 0.0016 (\ell/k) + 0.04/k < 1$ guarantees $|\mathcal{F}||\mathcal{G}| < M_k M_\ell$.
	
	\vspace{2mm}
	 \textbf{Case 3:  $\mathcal{P}$ and $\mathcal{Q}$ are common-center stars with $|\mathcal{P}|=k$ and $|\mathcal{Q}|=\ell$.}
	  Relabelling the ground set, we may assume the center is $1$. Then there exist sets $X, Y \subset [2,n]$ with $|X|=k$ and $|Y|=\ell$ such that $\mathcal{P} = \{\{1,x\} : x \in X\}$ and $\mathcal{Q} = \{\{1,y\} : y \in Y\}$.
	
	Because $\mathcal{F}$ is non-trivial, there exists some $F_0 \in \mathcal{F}$ that avoids $1$. Since every edge $\{1,x\} \in \mathcal{P}$ is a cover of $\mathcal{F}$, $F_0$ must intersect $\{1,x\}$ for all $x \in X$. Since $1 \notin F_0$, we must have $x \in F_0$,
implying $X \subset F_0$. Since $|X| = k = |F_0|$, we uniquely determine that $F_0 = X$, which means $X$ is the  only  set in $\mathcal{F}$ avoiding $1$.
 Similarly, there is a unique set $G_0 \in \mathcal{G}$ avoiding $1$, which must be $Y$. Because $X \in \mathcal{F}$, $Y \in \mathcal{G}$ and the families are cross-intersecting, it forces $X \cap Y \neq \emptyset$.
	
	Now consider any $F \in \mathcal{F}$ containing $1$. It must intersect $Y \in \mathcal{G}$. Hence,
	$$
	\mathcal{F} \subset \{X\} \cup \left\{ F \in \binom{[n]}{k} : 1 \in F,\ F \cap Y \neq \emptyset \right\} = \mathcal{H}_k(X, Y).
	$$
	Similarly,
	$$
	\mathcal{G} \subset \{Y\} \cup \left\{ G \in \binom{[n]}{\ell} : 1 \in G,\ G \cap X \neq \emptyset \right\} = \mathcal{H}_\ell(Y, X).
	$$
	These inclusions demonstrate that $|\mathcal{F}| \le M_k(n,k,\ell)$ and $|\mathcal{G}| \le M_\ell(n,k,\ell)$, establishing the conjectured upper bound $|\mathcal{F}||\mathcal{G}| \le M_k M_\ell$. Furthermore, equality holds if and only if both families precisely coincide with these Hilton--Milner-type configurations. This completes the proof of Theorem \ref{th4.1}.
\end{proof}

	\section*{Declaration of competing interest}
	The authors declare that they have no conflicts of interests to this paper.
	
	\section*{Data availability}
	No data was used for the research described in the paper.
	
	\section*{Acknowledgement}
	 Lihua Feng is supported by the NSFC (Nos. 12271527 and 12471022) and NSF of Qinghai Province (No. 2025-ZJ-902T). Lu Lu is supported by the NSFC (No. 12371362).


\begin{thebibliography}{99}{\small
\bibitem{BF22}
P. Borg, C. Feghali, The maximum sum of sizes of cross-intersecting families of subsets of a set, {\em Discrete Math.} 345(11) (2022) 112981.
\bibitem{E61}
P. Erd{\H{o}}s, C. Ko, R. Rado, Intersection theorems for systems of finite sets, {\em Quart. J. Math. Oxford Ser. (2)} 12(2) (1961) 313--320.
\bibitem{F24}
P. Frankl, On the maximum of the sum of the sizes of non-trivial cross-intersecting families, {\em Combinatorica} 44(1) (2024) 15--35.
\bibitem{F26}
P. Frankl, The maximum of the product of non-trivial cross-intersecting 3-graphs, {\em Combinatorics and Number Theory} 15(1) (2025)  1--8.
\bibitem{FK17}
P. Frankl, A. Kupavskii, A size-sensitive inequality for cross-intersecting families, {\em European J. Combin.} 62 (2017) 263--271.
\bibitem{FT92}
P. Frankl, N. Tokushige, Some best possible inequalities concerning cross-intersecting families, {\em J. Combin. Theory Ser. A} 61(1) (1992) 87--97.
\bibitem{FW23}
P. Frankl, J. Wang, A product version of the Hilton-Milner Theorem, {\em J. Combin. Theory Ser. A} 200 (2023) 105791.
\bibitem{FW26}
P. Frankl, J. Wang, A product version of the Hilton-Milner Theorem II, (2026) arXiv:2605.09246.
\bibitem{HW77}A. J. W. Hilton, An intersection theorem for a collection of families of subsets of a finite set, {\em J. London Math. Soc.} 2(3) (1977) 369--376.
\bibitem{HM67}
A. J. W. Hilton, E. C. Milner, Some intersection theorems for systems of finite sets, {\em Quart. J. Math.} 18(1) (1967) 369--384.
\bibitem{H26}
Y. Huang, A sharp product bound for non-trivial cross-intersecting families, (2026) arXiv:2606.23322.
\bibitem{MT89}
M. Matsumoto, N. Tokushige, The exact bound in the Erd\H{o}s-Ko-Rado theorem for cross-intersecting families, {\em J. Combin. Theory Ser. A} 52 (1989) 90--97.
\bibitem{P86}
L. Pyber, A new generalization of the Erd\H{o}s-Ko-Rado theorem, {\em J. Combin. Theory Ser. A} 43(1) (1986) 85--90.
}		
\end{thebibliography}
\end{document}